\newcommand{\cB}{\ensuremath{\mathcal{B}}}
\newcommand{\HF}{\ensuremath{\mathrm{HF}}}
\newtheorem{theorem}{Theorem}[section]
\newtheorem{lemma}[theorem]{Lemma}
\newtheorem{proposition}[theorem]{Proposition}
\newtheorem{corollary}[theorem]{Corollary}
\newtheorem*{proposition*}{Proposition}
\theoremstyle{definition}
\newtheorem{definition}[theorem]{Definition}
\newtheorem{remark}[theorem]{Remark}
\newtheorem{notation}[theorem]{Notation}
\DeclareMathOperator{\Ann}{Ann}
\DeclareMathOperator{\rank}{rank}
\DeclareMathOperator{\Hess}{Hess}
\DeclareMathOperator{\reg}{reg}
\numberwithin{equation}{section}
\title[The WLP for artinian Gorenstein algebras of small Sperner number]{The weak Lefschetz property for artinian Gorenstein algebras of small Sperner number}
\author[]{Mats Boij}
\address{Mats Boij: Department of Mathematics, KTH Royal Institute of Technology, 100 44 Stockholm, Sweden}
\email{boij@kth.se}
\author[]{Juan C. Migliore}
\address{Juan C. Migliore: Department of Mathematics, University of Notre  Dame, Notre Dame, IN 46556, USA}
\email{migliore.1@nd.edu}
\author[]{Rosa M.\ Mir\'o-Roig}
\address{Rosa Maria Mir\'o-Roig: Facultat de
Matem\`atiques i Inform\`atica, Universitat de Barcelona, Gran Via des les Corts Catalanes 585, 08007 Barcelona, Spain}
\email{miro@ub.edu,  ORCID 0000-0003-1375-6547}
\author[]{Uwe Nagel} 
\address{Uwe Nagel: Department of Mathematics, University of Kentucky, 715 Patterson Office Tower, Lexington, KY 40506-0027, USA}
\email{uwe.nagel@uky.edu}
\thanks{\hspace{-15pt} Boij was a Visiting Scholar at UC Berkeley while this work was carried out,}
\thanks{\hspace{-15pt} Migliore was partially supported by Simons Foundation grant \#839618,}
\thanks{\hspace{-15pt}   Mir\'o-Roig was partially supported by the grant PID2020-113674GB-I00,  
}
\thanks{\hspace{-15pt} Nagel was partially supported by Simons Foundation grant \#636513}
\theoremstyle{definition}
\begin{document}

\begin{abstract} 
For artinian Gorenstein algebras in codimension four and higher, it is well known that the Weak Lefschetz Property (WLP) does not need to hold. For Gorenstein algebras in codimension three, it is still open whether all artinian Gorenstein algebras satisfy the WLP when the socle degree and the Sperner number are both higher than six. We here show that all artinian Gorenstein algebras with socle degree $d$ and Sperner number at most $d+1$ satisfy the WLP, independent of the codimension. This is a sharp bound in general since there are examples of artinian Gorenstein algebras with socle degree $d$ and Sperner number $d+2$ that do not satisfy the WLP for all $d\ge 3$. 
\end{abstract}
\maketitle

\section{Introduction} 
\vskip 4mm
Artinian standard graded algebras have long been ubiquitous in many fields of mathematics, including algebraic geometry, commutative algebra, algebraic topology, combinatorics, etc. Over the last several decades, one general aspect of such algebras that has been extensively studied is whether or not they have the Weak Lefschetz Property (WLP -- see Definition \ref{WLP def}). If they do, it has strong implications for the Hilbert function; for example, not only must these functions be unimodal but in fact they must satisfy a strong growth condition \cite{HMNW}. In the other direction, it is also known precisely which Hilbert functions force the WLP to hold, with no additional information needed about the algebra \cite{MZ3}. 

While the role of the characteristic plays a very interesting role in such questions, in this paper we assume throughout that our work is over a field of characteristic zero.

One of the most important kinds of algebras that have been studied from the point of view of WLP is the class of artinian Gorenstein algebras. (From now on we will always assume that our algebras are graded.) Since 1980, many papers appeared showing that the Hilbert function of an artinian Gorenstein algebra need not be unimodal, including the initial result by R. Stanley \cite{stanley} who produced the first example, namely the Hilbert function $(1,13,12,13,1)$,  and the first author of this paper, who showed that the Hilbert function of an artinian Gorenstein algebra could be as ``non-unimodal" as one wants \cite{B2}. Such algebras clearly fail the WLP. 

Taking a cue from the situation with artinian algebras in general, an intriguing problem is to look for conditions that force an artinian Gorenstein algebra to have the WLP. An important special case is an old conjecture \cite{RRR} that all complete intersections have the WLP. This is currently known only in two or three variables \cite{HMNW}, although partial results are known. In this paper we are interested in properties of the Hilbert function that force an artinian Gorenstein algebra (not necessarily a complete intersection) to have the WLP. In this case the Hilbert function has the form
\[
(1,h_1,h_2,\dots, h_{d-1}, 1)
\]
and it is further known that this sequence is symmetric ($h_i = h_{d-i}$). The maximum entry of this vector is called the \emph{Sperner number} of the algebra.

It is known that the condition $h_1=2$ forces the WLP among artinian Gorenstein algebras since all algebras in two variables have the SLP. When $h_1=3$, it is conjectured that all artinian Gorenstein algebras have the WLP, so the condition $h_1 = 3$ is conjectured to be a condition that forces WLP. Partial results are known, for instance in \cite{BMMNZ} it was shown by the authors together with F.~Zanello that all artinian Gorenstein algebras with socle degree $d\le 6$ and codimension $h_1=3$ satisfy the SLP. More recently, this was extended by N.~Abdallah, N.~Altafi, A.~Iarrobino, A.~Seceleanu and J.~Yam\'eogo~\cite{AAISY} to all artinian Gorenstein algebras of codimension $h_1=3$ and Sperner number at most $6$. %\cite{AAIY}.

It is well known that artinian Gorenstein algebras failing WLP exist for all $h_1 \geq 4$. In the other extreme, trivially (using duality) it is true that any compressed Gorenstein algebra (i.e. one of maximal Hilbert function) of even socle degree has the WLP;  e.g. $(1,4,10,20,10,4,1)$ has it, since in the first half it inherits injectivity from the polynomial ring, and the second half follows by duality. The analogous statement for odd socle degree is not true for any $h_1 \geq 4$. So it is of interest to find more subtle properties of the Hilbert function that force WLP to hold. 
For instance, it follows easily from the theorem of Gordan and Noether (see Theorem \ref{hesscriterium}  and Corollary \ref{wlp slp}) that in characteristic zero a Hilbert function $(1, 4, k, k, 4, 1)$ forces not only  the WLP but also the SLP. On the other hand, as an illustration of the role of the characteristic, Kustin \cite{kustin} has shown that the Hilbert function $(1,4,4,1)$ has the WLP in all characteristics except two, and in the latter case he has given the unique ideal (up to change of variables) that fails. In this paper, while we restrict to characteristic zero, the identification of ideals up to change of variables that have certain properties plays an important role.

The goal of this paper is to prove a very broad result, giving a class of Hilbert functions in any number of variables $\geq 3$ that force artinian Gorenstein algebras to have the WLP. To be precise, we prove the following theorem. 
\begin{theorem}[Theorem~\ref{mainthm}]
  Any artinian Gorenstein algebra $A$ of socle degree $d$ and Sperner number $\le d+1$ satisfies the WLP.  
\end{theorem}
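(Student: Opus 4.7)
The plan is to combine the Macaulay inverse system description of artinian Gorenstein algebras with the Hessian criterion referenced in the introduction (Theorem~\ref{hesscriterium}), while exploiting the very restrictive numerical constraint Sperner number $\le d+1$ to reduce to a small number of tractable configurations.

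First, I would reduce what needs to be checked. Since $A$ is Gorenstein, the Hilbert function $(1,h_1,\ldots,h_{d-1},1)$ is symmetric, and for a general linear form $\ell$, multiplication $\mu_{\ell,i}\colon A_i \to A_{i+1}$ is injective if and only if $\mu_{\ell,d-i-1}$ is surjective. So WLP reduces to injectivity of $\mu_{\ell,i}$ for all $i < d/2$, i.e.\ in the ascending part of the Hilbert function up to the peak. Then I would dispose of low codimension: if $h_1 \le 2$ then $A$ is a quotient of $K[x,y]$ and the SLP is automatic, and if $h_1 = 3$ then the hypothesis Sperner $\le d+1$ combined with \cite{AAISY} handles all $d \le 5$; the remaining codimension-$3$ cases must be treated by the main argument. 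So the new content concerns $h_1 \ge 4$, or codimension $3$ with larger $d$.

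Next, I would pass to the Macaulay dual: write $A \cong R/\Ann(F)$ for a form $F$ of degree $d$ in $h_1$ essential variables. In this language $h_i = \rank \operatorname{Cat}_i(F)$, the $i$-th catalecticant of $F$, and the hypothesis translates to the uniform bound $\rank \operatorname{Cat}_i(F) \le d+1$ for all $i$. This is a strong restriction: the total climb from $h_0=1$ to the peak is at most $d$, distributed over roughly $d/2$ steps, so the increments $h_{i+1}-h_i$ average at most $2$, and Macaulay's growth constraints then cut down the list of admissible Hilbert functions substantially. At the extreme $h_1 = d+1$, the algebra has almost compressed behavior only in degree $1$ and is very constrained; when $h_1 < d+1$, the algebra is forced to "plateau" for several consecutive degrees, which in inverse-system language forces $F$ to have a concrete structural decomposition.

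I would then invoke the Hessian criterion: for each intermediate degree $i$, the injectivity of $\mu_{\ell,i}$ for general $\ell$ is equivalent to the non-vanishing of a certain $i$-th Hessian of $F$ modulo $\Ann(F)$. Using the normal form or decomposition of $F$ extracted from the catalecticant bounds, I would verify this non-vanishing directly, splitting into cases according to whether the peak is attained in degree $1$, strictly inside the first half, or only at the middle; in each case, the narrowness of the Hilbert function window limits how degenerate $F$ can be.

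The hardest part, I expect, is precisely the Hessian non-vanishing step in the "middle" degrees, where the Hilbert function is closest to its peak and the known failures of WLP for Gorenstein algebras (e.g.\ in larger Sperner number) are concentrated. The sharpness of the bound $d+1$ shown by the examples with Sperner $d+2$ means the numerical hypothesis must be used essentially, not just as a crude inequality; the real work is converting "Sperner $\le d+1$" into a structural statement about $F$ strong enough to rule out all the Hessian vanishing loci simultaneously, uniformly in the codimension.
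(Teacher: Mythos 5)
Your proposal is a plan rather than a proof, and the plan has a structural flaw at its center. The Hessian criterion you invoke (\Cref{Hess}, \Cref{hesscriterium}) governs the maps $\times\ell^{\,d-2i}\colon [A]_i\to[A]_{d-i}$, i.e.\ multiplication by a \emph{power} of $\ell$ between dual degrees; it says nothing directly about the single multiplication $\times\ell\colon[A]_i\to[A]_{i+1}$ except in the one case $d-2i=1$, the middle map for odd socle degree. So the step ``for each intermediate degree $i$, injectivity of $\mu_{\ell,i}$ is equivalent to non-vanishing of an $i$-th Hessian'' is not available from the results you cite (one would need mixed Hessians, which you neither state nor develop), and the remaining steps --- ``extract a normal form of $F$ from the catalecticant bounds'' and ``verify the Hessian non-vanishing directly'' --- are exactly the content you would have to supply; you acknowledge this yourself in the last paragraph. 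In particular, nothing in the proposal rules out failure of maximal rank away from the middle, and nothing produces the structural classification of $F$ that the Sperner bound is supposed to yield.

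For comparison, the paper does not attack all degrees by Hessians. It runs an induction on the socle degree using the exact sequence $0\to B(-1)\to A\to C\to 0$ with $B=A/(0:\ell)$, $C=A/(\ell)$: for $j\ge (d+1)/2$ the Sperner bound gives $h_{j+1}\le d+1\le 2(j+1)$, so Green's theorem (\Cref{lem:hilb is two}) forces $\dim_K[C]_{j+1}\le 1$, the Sperner number of $B$ strictly drops so the inductive hypothesis applies to $B$, and the Snake Lemma (\Cref{snake}) gives surjectivity; duality then handles the lower half. This reduces everything to the middle map in odd socle degree, where the difference $\delta=h_m-h_{m-1}$ is analyzed: $\delta\ge3$ is numerically impossible, $\delta\in\{0,1\}$ is again handled by Green plus the snake lemma, and $\delta=2$ leaves exactly three $h$-vector families, $(1,3,5,7,\dots)$, $(1,3,6,8,\dots)$, $(1,4,6,8,\dots)$, which are the genuine hard cases (Theorems \ref{135}, \ref{136}, \ref{keyresult}); the last requires a full classification of the four-dimensional quadric systems that can occur (\Cref{thm:quadrics}) and explicit catalecticant/Hessian computations for each orbit, together with the small-socle-degree base cases. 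None of this case analysis, nor a workable substitute for it, appears in your proposal, so the argument as written does not establish the theorem.
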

This result is optimal in the sense that there are artinian Gorenstein algebras failing to have the WLP with socle degree $d$ and Sperner number $d+2$ for all $d\ge3$. These examples can be obtained by Stanley's construction by trivial extensions also known as \emph{Perazzo algebras} (cf. Remark~\ref{optimal}).
In order to prove our main theorem, we introduce some surprisingly strong new methods, adapt some known methods, and give an induction on the socle degree of the algebra. We now give a more precise description of the main results and the structure of this paper. 

In Section~\ref{S2} we recall necessary definitions and theorems from the literature needed for the following sections. The proof of the main theorem is build on an induction on the socle degree and by establishing the WLP for three families of artinian Gorenstein algebras with the following three $h$-vectors:
\begin{itemize}
    \item $(1,3,5,7,9,\dots,2m-1,2m+1,2m+1,2m-1,\dots,9,7,5,3,1)$, 
    \item $(1,3,6,8,10,\dots,2m,2m+2,2m+2,2m,\dots,10,8,6,3,1)$, and 
    \item $(1,4,6,8,10,\dots,2m,2m+2,2m+2,2m,\dots,10,8,6,4,1)$.
\end{itemize}
In Section~\ref{S3} we prove these results. The proofs of the WLP for the first two families use results by Migliore, Nagel and Zanello~\cite{MNZ} and by Migliore and Zanello~\cite{MZ} about the WLP for certain artinian Gorenstein algebras in codimension three and four when the socle degree is at least $7$ and the results from \cite{BMMNZ} when the socle degree is at most $6$. The proof of the WLP for the third family is the main effort of this paper and it builds on considering two cases.
The first case is to reduce to the second family by constructing a subalgebra generated by three general linear forms. The second case deals with the situation where this reduction does not work. In this case the initial monomials with respect to the lexicographic order of the quadrics of our Gorenstein ideal have to be $x_1^2,x_1x_2,x_1x_3,x_2^2$ after a generic change of coordinates. In Theorem~\ref{thm:quadrics} we provide a complete classification of such ideals generated by four quadrics, not assuming that they are Gorenstein.
Our results show that there is a finite number of orbits under the general linear group, and we identify explicitly an orbit generator for each orbit.
The analysis of the orbits is based on a geometric approach. Once this is established, we prove the WLP for the artinian Gorenstein algebras in the third family for one representative from each orbit. 
In Section~\ref{S4} we prove the main theorem and as a corollary we give a complete classification of the $h$-vectors of Sperner number at most $6$ that force the WLP, see \Cref{low sperner}.

\vskip 4mm
\noindent {\bf Acknowledgements.} 
This paper began as a Research in Teams project at the Banff International Research Station for Mathematical Innovation and Discovery in March, 2024. 
The authors are very  grateful to BIRS for providing the opportunity and wonderful setting for this collaboration. 

\section{Background material}\label{S2}
Throughout this paper $K$ will be an algebraically closed field of characteristic zero.
Given a standard graded artinian $K$-algebra $A=R/I$ where $R=K[x_1,\dots,x_n]$ and $I$ is a homogeneous ideal of $R$, we denote by $\HF_A\colon \mathbb{Z} \longrightarrow \mathbb{Z}$ with $\HF_A(j)=\dim _K[A]_j=\dim _K[R/I]_j$ its Hilbert function. Since $A$ is artinian, its Hilbert function is captured in its \emph{$h$-vector} $h=(h_0,\dots ,h_d)$ where $h_i=\HF_A(i)>0$ and $d$ is the largest index with this property. The integer $d$ is called the \emph{socle degree of} $A$ or the regularity of $A$ and denoted $\reg(A)$. 

%%%%%%%%%%%%%%%%%%%%%%%%%%%%%%%%%%%%%%%%%%%%%%%%%%

\subsection{Lefschetz properties}

In this section we give the definitions of the weak and strong Lefschetz properties. 

\begin{definition} \label{WLP def}
Let $A=R/I$ be a graded artinian $K$-algebra. We say that $A$ has the {\em weak Lefschetz property} (WLP, for short)
if there is a linear form $\ell \in [A]_1$ such that, for all
integers $i\ge0$, the multiplication map
\[
\times \ell: [A]_{i}  \longrightarrow  [A]_{i+1}
\]
has maximal rank.
 In this case, the linear form $\ell$ is called a {\it (weak) Lefschetz element} of $A$. If for the general form $\ell \in [A]_1$ and for an integer $j$ the map $\times \ell:[A]_{j-1}  \longrightarrow  [A]_{j}$ does not have maximal rank, we will say that the ideal $I$ fails the WLP in degree $j$.

$A$ has the {\em strong Lefschetz property} (SLP, for short) if there is a linear form $\ell \in [A]_1$ such that, for all integers $i\ge0$ and $k\ge 1$, the multiplication map
\[
\times \ell^k: [A]_{i}  \longrightarrow  [A]_{i+k}
\]
has maximal rank.  Such an element $\ell$ is called a {\it  strong Lefschetz element} for $A$.
\end{definition}

%%%%%%%%%%%%%%%%%%%%%%%%%%%%%%%%%%%%%%%%%%%%%%%%%%

 \subsection{Gorenstein algebras and Macaulay duality}

\begin{definition}
    A standard graded artinian algebra $A = R/I$ is {\it Gorenstein} if its $h$-vector is symmetric and its socle is one-dimensional, occuring in just the last degree. We sometimes refer to the number of variables of $R$ as the {\it codimension} of $A$.
\end{definition}

We quickly recall the construction of the standard graded artinian Gorenstein algebra $A_F$ with {\em Macaulay dual generator} a given form $F\in S=K[X_0,\dots,X_N]$; we denote by $R=K[x_0,\ldots,x_N]$ the ring of differential operators acting on the polynomial ring $S$, i.\,e.\ $x_i=\frac{\partial}{\partial X_i}$. Therefore $R$ acts on $S$ by differentiation. Given polynomials $p\in R$ and $G\in S$ we will denote by ${p\circ G}$ the differential operator $p$ applied to $G$. For a homogeneous polynnomial $F \in S$ we define
\[
\Ann_RF:=\{p\in R \mid p\circ F=0\}\subset R,
\]
and $A_F=R/\Ann_R F$: it is a standard graded artinian Gorenstein $K$-algebra and $F$ is called its {\em Macaulay dual generator}. It is worthwhile to point out that every standard graded artinian Gorenstein $K$-algebra is of the form $A_F$ for some form $F$, in view of the \lq\lq Macaulay double annihilator Theorem'' (see for instance \cite[Lemma 2.12]{IK}). For an ideal $I\subseteq R$, the \emph{inverse system} $I^\perp$, is given by 
\[
I^\perp := \{F\in S\colon g\circ F = 0, \forall g\in I\}.
\]
\begin{lemma} \label{basic seq}
\label{exactsequence}
Let $A=A_F$ be an artinian Gorenstein $K$-algebra and set $I=\Ann_R(F)$. Then for every linear form $\ell\in A_1$ the sequence
\begin{equation}
\label{seq}
0\longrightarrow \frac{R}{(I:\ell)}(-1)\longrightarrow \frac{R}{I}\longrightarrow \frac{R}{(I,\ell)}\longrightarrow 0
\end{equation}
is exact. Moreover $\frac{R}{(I:\ell)}$ is an artinian Gorenstein algebra with $\ell\circ F$ as dual generator.
\end{lemma}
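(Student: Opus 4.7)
The plan is to establish the short exact sequence purely formally from the definition of the colon ideal, and then to identify $(I:\ell)$ with the annihilator of $\ell\circ F$ using how differential operators compose.

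First I would prove exactness. Consider the homomorphism of graded $R$-modules
\[
\varphi\colon \frac{R}{(I:\ell)}(-1)\longrightarrow \frac{R}{I}, \qquad [r]\longmapsto [\ell r].
\]
This is well-defined and of degree zero by the definition of the colon ideal and the shift. It is injective, since $\ell r\in I$ precisely means $r\in (I:\ell)$, and its image equals $(I,\ell)/I$. The quotient $(R/I)/((I,\ell)/I)$ is canonically isomorphic to $R/(I,\ell)$, which gives the sequence
\[
0\longrightarrow \frac{R}{(I:\ell)}(-1)\stackrel{\varphi}{\longrightarrow} \frac{R}{I}\longrightarrow \frac{R}{(I,\ell)}\longrightarrow 0.
\]

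Next I would identify $(I:\ell)$ with the Macaulay annihilator of $\ell\circ F$. Since the action of $R$ on $S$ by differentiation makes $S$ into an $R$-module, we have $(g\ell)\circ F=g\circ(\ell\circ F)$ for every $g\in R$. Therefore, chaining equivalences,
\[
g\in(I:\ell) \iff g\ell\in \Ann_R(F) \iff (g\ell)\circ F=0 \iff g\circ(\ell\circ F)=0 \iff g\in \Ann_R(\ell\circ F).
\]
Hence $(I:\ell)=\Ann_R(\ell\circ F)$, so $R/(I:\ell)=A_{\ell\circ F}$, which is artinian Gorenstein with Macaulay dual generator $\ell\circ F$ by the Macaulay double annihilator theorem cited earlier in the section. (If $\ell\in I$, i.e.\ $\ell\circ F=0$, then $(I:\ell)=R$, the left-hand term of the sequence is zero, and the claim is vacuous; otherwise $\ell\circ F$ is a nonzero form and gives a genuine dual generator.)

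There is no real obstacle here: both assertions are formal consequences of definitions. The only place one must be slightly careful is the degree shift in $\varphi$ (multiplication by $\ell$ raises degree by one, so the source is twisted by $-1$ to make the map homogeneous of degree zero) and the trivial edge case $\ell\in I$ noted above.
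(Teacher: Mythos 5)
Your proof is correct and follows essentially the same route as the paper: the exact sequence comes from identifying the kernel and cokernel of multiplication by $\ell$ on $R/I$ (the paper phrases this as cutting the four-term sequence $0\to (I:\ell)/I(-1)\to R/I(-1)\xrightarrow{\times\ell} R/I\to R/(I,\ell)\to 0$ into two short exact sequences), and your chain of equivalences $(I:\ell)=\Ann_R(\ell\circ F)$ via $(g\ell)\circ F=g\circ(\ell\circ F)$ is exactly the ``straightforward computation'' the paper leaves implicit. Your handling of the degree shift and the degenerate case $\ell\circ F=0$ is a welcome extra precision.
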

\begin{proof}
We get the result cutting the exact sequence
\[
0\longrightarrow \frac{(I:\ell)}{I}(-1)
\longrightarrow \frac{R}{I}(-1)\xrightarrow{\,\,\,\times\ell\,\,\,}\frac{R}{I}\longrightarrow \frac{R}{(I,\ell)}\longrightarrow 0
\]
into two short exact sequences. The second fact is a straightforward computation.
\end{proof}

\begin{notation} \label{ABC}
Let $A = R/I$ be an artinian Gorenstein algebra and let $\ell$ be a general linear form.
The sequence given in Lemma \ref{basic seq} will be used often in this paper, so we introduce the following notation.
\[
B = \frac{R}{(I:\ell)} = \frac{A}{(0:\ell)}\ \ \ \hbox{and} \ \ \ C = \frac{R}{(I,\ell)}=\frac{A}{(\ell)}.
\]
It is useful to note that $B$ is isomorphic tothe image of the map $A(-1) \stackrel{\times \ell}{\longrightarrow} A$.
\end{notation}
 
%%%%%%%%%%%%%%%%%%%%%%%%%%%%%%%%%%%%%%%%%%%%%%%%%%

\subsection{Useful theorems}

\begin{remark}
Let $n$ and $i$ be positive integers. The {\em i-binomial expansion of n} is 
\[n_{(i)}=\binom{n_i}{ i}+\binom{n_{i-1}}{ i-1}+...+\binom{n_j}{ j},\] 
where $n_i>n_{i-1}>...>n_j\geq j\geq 1$. Such an expansion always exists and it is unique (see, e.g.,  \cite[Lemma 4.2.6]{BH}).

Following \cite{BH}, we define, for any integers $a$ and $b$,
\[
(n_{(i)})_{a}^{b}=\binom{n_i+b}{ i+a}+\binom{n_{i-1}+b}{ i-1+a}+...+\binom{n_j+b}{ j+a},
\]
where we set $\binom{m}{ q}=0$ whenever $m<q$ or $q<0$.
\end{remark} 

\begin{theorem} \label{thm:useful}
Let $A = R/I$ be a standard graded $K$-algebra and let $\ell \in A$ be a general linear form. Denote by $h_d$  the degree $d$ entry of the Hilbert function of $A$ and by $h_d^{'}$ the degree $d$ entry of the Hilbert function of $A/\ell A \cong R/(I,\ell)$. Then:
\begin{itemize}
\item[(i)] { (Macaulay)} 
\[
h_{d+1}\leq ((h_d)_{(d)})_1^1.
\]
\item[(ii)] { (Green)} 
\[
h_d^{'}\leq ((h_d)_{(d)})_{0}^{-1}.
\]
\item[(iii)] (Gotzmann) 
If $h_{d+1} = ((h_d)_{(d)})^{+1}_{+1}$ and $I$ is generated in degrees $\leq d+1$, then 
\[
h_{d+s} = ((h_d)_{(d)})^s_s \hbox{\hspace{.3cm} for all $s \geq 1$}.
\]
\end{itemize}
\end{theorem}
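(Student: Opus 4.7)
The statement collects three classical bounds, attributed to Macaulay (i), Green (ii), and Gotzmann (iii). My plan is not to reprove them from scratch, since each is a black-box result from the literature, but to outline the common strategy used in their standard proofs, which is centered on passage to the generic initial ideal of $I$ with respect to the reverse lexicographic order and comparison with lex-segment ideals.

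For part (i), I would first reduce to the case where $I$ is a monomial ideal by replacing $I$ with its initial ideal, which preserves the Hilbert function. I would then replace that monomial ideal by the lex-segment ideal having the same Hilbert function in degree $d$, namely the ideal whose degree $d$ component is spanned by the largest $\dim_K R_d - h_d$ monomials in lex order. One checks directly, by counting shifts under multiplication by variables, that the lex-segment achieves the maximum possible value of $\dim_K [R/I]_{d+1}$; the resulting bound is exactly the binomial identity encoded in $(n_{(d)})_1^1$.

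For part (ii), the plan is to exploit that in characteristic zero the generic initial ideal of $I$ is Borel-fixed, and that for a Borel-fixed monomial ideal $J$ the quotient $R/(J,x_n)$ can be computed by simply setting $x_n=0$ in the minimal monomial generators of $J$. A combinatorial counting argument, again comparing with the lex-segment with the same Hilbert function in degree $d$, then yields Green's bound $((h_d)_{(d)})_0^{-1}$ on $h_d'$.

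For part (iii), the hypothesis that $h_{d+1}$ achieves the Macaulay bound forces the degree $d+1$ piece of the generic initial ideal to coincide with a lex-segment; combined with the assumption that $I$ is generated in degrees $\le d+1$, this pins down the generic initial ideal as a lex-segment ideal in every higher degree as well, and lex-segment ideals satisfy the maximal-growth identity term by term. The main obstacle, and the technical heart of Gotzmann's theorem, lies precisely in showing that equality in the Macaulay chain propagates from one degree to all subsequent ones; this genuinely requires the machinery of Borel-fixed ideals and generation hypotheses, rather than a purely combinatorial count at a single degree. Since the paper uses (i)--(iii) only as a toolbox and all three results are standard, I would simply invoke them with the appropriate citations in the final write-up.
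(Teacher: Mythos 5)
Your proposal ends exactly where the paper's proof does: all three statements are treated as classical black-box results and handled by citation (Macaulay and Gotzmann to Bruns--Herzog, Green to his original paper), so you take essentially the same approach. Your sketch of the standard gin/lex-segment arguments is a reasonable summary of the underlying proofs, but it is not needed beyond the citations, which is all the paper itself provides.
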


\begin{proof}
   For (i) see \cite[Theorem 4.2.10]{BH}. 
    For (ii) see \cite[Theorem 1]{G}. For (iii) see     \cite[Theorem 4.3.3]{BH}, or \cite{Go}.
\end{proof} 

The following observation will be  used many times in this paper. 

\begin{lemma} \label{lem:hilb is two}
If $A$ is a standard graded $K$-algebra with $\dim_K [A]_j \le 2j$ for some integer $j$ then $\dim_K [A/\ell A]_j \le 1$ for any general $\ell \in [A]_1$. 
\end{lemma}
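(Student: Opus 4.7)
The natural tool is Green's hyperplane restriction theorem (Theorem~\ref{thm:useful}(ii)), which for a general linear form $\ell \in [A]_1$ gives the bound
\[
\dim_K[A/\ell A]_j \le ((h_j)_{(j)})_0^{-1},
\]
where $h_j = \dim_K[A]_j$. My plan is therefore to reduce the lemma to the purely numerical statement that $((n)_{(j)})_0^{-1} \le 1$ for every $n \le 2j$, and then verify this by inspecting the $j$-binomial expansion directly.

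To carry out the numerical step, I would write the $j$-binomial expansion $h_j = \binom{n_j}{j} + \binom{n_{j-1}}{j-1} + \cdots + \binom{n_k}{k}$ and first argue that the leading index satisfies $n_j \le j+1$. This is because $\binom{j+2}{j} = \tfrac{(j+1)(j+2)}{2} > 2j$ for every $j \ge 1$, so a leading term with $n_j \ge j+2$ is already too big on its own. Then I would split into two cases. If $n_j = j$ (the only possibility $\le j$ given the requirement $n_j \ge j$), then the strict inequalities $n_j > n_{j-1} > \cdots$ force every subsequent $n_i \le i$; applying the shift produces $\binom{n_i-1}{i} = 0$ in every term, yielding the bound $0$. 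If instead $n_j = j+1$, the leading term already contributes $j+1$, leaving remainder at most $2j - (j+1) = j-1$ for the tail; the same argument shows each remaining $n_i \le i$, so the shift annihilates every tail term, and $((h_j)_{(j)})_0^{-1} = \binom{j}{j} + 0 + \cdots + 0 = 1$.

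Combining the two cases with Green's inequality gives $\dim_K[A/\ell A]_j \le 1$, as required. There is no real obstacle here: the only subtle point is recognizing that $n_j \in \{j, j+1\}$ under the hypothesis $h_j \le 2j$, after which the shift $(-)_0^{-1}$ kills every binomial coefficient $\binom{n_i-1}{i}$ with $n_i \le i$ and leaves at most the single surviving $\binom{j}{j}=1$ coming from a leading $\binom{j+1}{j}$.
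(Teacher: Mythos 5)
Your proof is correct and follows essentially the same route as the paper: Green's restriction theorem (Theorem~\ref{thm:useful}(ii)) combined with arithmetic of the $j$-binomial expansion. The only difference is that the paper just records the expansion $2j=\binom{j+1}{j}+\binom{j-1}{j-1}+\cdots+\binom{1}{1}$ and invokes Green (implicitly using monotonicity of the bound in $h_j$), whereas you bound $((h_j)_{(j)})_0^{-1}$ directly for every $h_j\le 2j$ by the case analysis $n_j\in\{j,j+1\}$ — a slightly more self-contained rendering of the same argument.
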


\begin{proof}
Notice that 
\[
2 j = \binom{j+1}{j} + \binom{j-1}{j-1} + \cdots + \binom{1}{1}. 
\]
Hence, the claim follows from Green's \Cref{thm:useful}(ii).
\end{proof}

\begin{definition}
If $B = R/I$ is a graded algebra and $A = B/J$ is an artinian Gorenstein quotient of $B$ of socle degree $d$ we can write $A =  R/\Ann_R(F)$ where $F\in [I^\perp]_d$. The $i$th \emph{catalectiacant matrix} is given by 
    \[
    (\operatorname{Cat}^i(F))_{u,v}= (\alpha^{(i)}_u\beta^{(d-i)}_v)\circ F
    \]
    where $\{\alpha^{(i)}_m \}_m$ is a $K$-basis for $[B]_i$ and $\{\beta^{(d-i)}_m \}_m$ is a $K$-basis for $[B]_{d-i}$.  
\end{definition}

It is well-known (cf. \cite[§1]{IK}) that the rank of the catalecticant matrices equals the Hilbert function of $A$: 
\[
\dim_K [A]_i = \rank \operatorname{Cat}^i(F), \quad i = 0,1,\dots,d.
\]

\begin{definition}
  For a homogeneous $F\in S$, let $A = R/\Ann_RF$ be its associated artinian Gorenstein algebra. Let $\cB_i = \{\alpha^{(i)}_m \}_m$ be a $K$-basis of $[A]_i$. The entries of the $i$-th \emph{Hessian matrix} of $F$ with respect to $\cB_i$ are given by
\[
(\Hess^i(F ))_{u,v} = (\alpha^{(i)}_u \alpha^{(i)}
_v )\circ F.
\]
\end{definition}

Up to a non-zero constant multiple the determinant $\det (\Hess^i(F ))$ is independent of the basis $\cB_i$. 
T. Maeno and J. Watanabe provided a criterion for artinian Gorenstein  algebras to have the SLP and to identify the SL elements using the Hessians. Indeed, we have:

\begin{theorem}\label{Hess}
Let $F\in S$ be a homogeneous polynomial of degree $d$, let  $ A= R/\Ann_R F$ be the associated artinian Gorenstein algebra and let $\ell=a_1x_1+\cdots +a_nx_n\in [A]_1$ be a linear form.
Then, up to a multiplicative constant, the $i$-th Hessian matrix $\Hess^i(F)(a_1,\dots, a_n)$ is the matrix of the dual map of the multiplication map $\times \ell^{d-2i}\colon [A]_{i}  \to  [A]_{d-i}$.
Therefore, $\ell $ is a SL element of $A$ if and only if
\[
\det( \Hess ^i _F(a_1,\dots, a_n))\ne 0,\quad \text{for each} \  i=0,\dots,\left\lfloor \tfrac{d}{2}\right\rfloor.
\]        

\end{theorem}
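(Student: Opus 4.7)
The plan is to identify the evaluated Hessian $\Hess^i(F)(a_1,\dots,a_n)$ with the matrix of the multiplication map $\mu_i := \times \ell^{d-2i}\colon [A]_i \to [A]_{d-i}$ under the duality provided by the perfect Macaulay pairing on the Gorenstein algebra $A$. The main computational input is the apolarity identity
\[
G(a_1,\dots,a_n) \;=\; \tfrac{1}{m!}\,(\ell^m \circ G),
\]
valid for any homogeneous $G\in S$ of degree $m$ when $\ell = \sum a_j x_j$; this is verified by a routine multinomial expansion together with $x^{\underline{k}}\circ X^{\underline{k}} = \underline{k}!$. First I would apply this identity to the $(u,v)$-entry $(\alpha_u^{(i)}\alpha_v^{(i)})\circ F$ of $\Hess^i(F)$, which is homogeneous of degree $d-2i$, yielding
\[
\bigl(\Hess^i(F)(a_1,\dots,a_n)\bigr)_{u,v} \;=\; \tfrac{1}{(d-2i)!}\bigl(\ell^{d-2i}\alpha_u^{(i)}\alpha_v^{(i)}\bigr)\circ F.
\]

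Next I would interpret this expression intrinsically: up to the nonzero scalar $\tfrac{1}{(d-2i)!}$, it is the Gram matrix in the basis $\cB_i$ of the symmetric bilinear form $B(\alpha,\beta) := (\ell^{d-2i}\alpha\beta)\circ F$ on $[A]_i\times [A]_i$. Because $A$ is Gorenstein of socle degree $d$, the Macaulay pairing $\mathfrak{p}\colon [A]_i \times [A]_{d-i}\to K$, $(\alpha,\gamma)\mapsto (\alpha\gamma)\circ F$, is perfect, and one reads off directly that $B(\alpha,\beta) = \mathfrak{p}(\alpha,\mu_i(\beta))$. Hence $B$ factors as $\mathfrak{p}\circ(\mathrm{id}\times \mu_i)$, and after using $\mathfrak{p}$ to identify $[A]_{d-i}$ with $[A]_i^{\ast}$, the Hessian (up to constant) becomes the matrix of $\mu_i$. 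This gives the first assertion, and as an immediate consequence $\det \Hess^i(F)(a_1,\dots,a_n)\ne 0$ if and only if $\mu_i$ is an isomorphism, which by the symmetry $\dim[A]_i = \dim[A]_{d-i}$ is equivalent to $\mu_i$ having maximal rank.

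For the SLP characterization, I would then invoke the standard reduction for Gorenstein algebras with symmetric Hilbert function: $\ell$ is a strong Lefschetz element if and only if every central multiplication $\mu_i$ with $0\le i\le \lfloor d/2\rfloor$ is an isomorphism. The forward direction is immediate. For the converse, an arbitrary $\times\ell^k\colon [A]_j\to [A]_{j+k}$ factors as a composition of a central $\mu_j$ (or $\mu_{j+k}$) with another power of $\ell$, and the symmetry of the Hilbert function (which itself follows from the central maps being isomorphisms) promotes the resulting injectivity or surjectivity to maximal rank of $\times\ell^k$. Combined with the bilinear-form analysis above, this yields the Hessian criterion. The step I expect to require the most care is this final reduction; the apolarity identity and the factorization of $B$ through $\mathfrak{p}$ are essentially formal once set up correctly.
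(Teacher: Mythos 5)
Your proposal is essentially correct, but note that the paper does not prove \Cref{Hess} at all: its ``proof'' is a citation to Maeno--Watanabe \cite{MW}. What you have written is a self-contained reconstruction of that standard argument, and it holds up: the identity $\ell^{m}\circ G = m!\,G(a_1,\dots,a_n)$ for $G$ homogeneous of degree $m$ turns the evaluated Hessian into the Gram matrix of $(\alpha,\beta)\mapsto(\ell^{d-2i}\alpha\beta)\circ F$, and the perfection of the Macaulay pairing $[A]_i\times[A]_{d-i}\to K$, $(\alpha,\gamma)\mapsto(\alpha\gamma)\circ F$ (standard for $A=R/\Ann_R F$, and worth one explicit sentence, since it is exactly the statement that the socle of $A$ sits only in degree $d$), identifies that Gram matrix with the matrix of the dual of $\times\ell^{d-2i}$, so $\det\Hess^i(F)(a_1,\dots,a_n)\ne 0$ iff $\times\ell^{d-2i}\colon[A]_i\to[A]_{d-i}$ is an isomorphism. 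Two small refinements to your final reduction: the ``promotion'' step needs no appeal to symmetry of the Hilbert function, because an injective (resp.\ surjective) linear map automatically has maximal rank; and the factorizations can be made explicit as $\times\ell^{d-2j}=\times\ell^{d-2j-k}\circ\times\ell^{k}$ on $[A]_j$ when $2j+k\le d$ (giving injectivity of $\times\ell^k$) and $\times\ell^{d-2i}=\times\ell^{k}\circ\times\ell^{\,j-i}$ with $i=d-j-k$ when $2j+k\ge d$ and $j+k\le d$ (giving surjectivity), where in the second case $i\le\lfloor d/2\rfloor$ holds automatically, so only the central maps from the hypothesis are used. With those details spelled out, your argument is a complete proof of the cited result rather than an appeal to the literature, which is the only respect in which it differs from the paper.
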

\begin{proof}
    See, for instance, \cite[Theorem 3.1]{MW}.
\end{proof}

The following theorem is due to P. Gordan and M. Noether in \cite{GN}, and it was reproved in
\cite{WatdeB}.

\begin{theorem}\label{hesscriterium}  
Assume $n\le 4$. For any form $F\in S$ the Hessian determinant $\det (\Hess^1(F ))$ is identically zero if and only if F is a cone: that is, if F is annihilated by a linear form  in the Macaulay duality.
\end{theorem}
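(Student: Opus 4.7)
Suppose there is a nonzero linear form $\ell = \sum a_i x_i \in R_1$ with $\ell \circ F = \sum_i a_i \, \partial_i F = 0$. Differentiating this identity with respect to each $X_j$ gives $\sum_i a_i\, \partial_i \partial_j F = 0$ for every $j$, so the row vector $(a_0,\ldots,a_N)$ annihilates $\Hess^1(F)$ viewed as a matrix over the polynomial ring $S$. Consequently $\det(\Hess^1(F)) \equiv 0$.

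\textbf{Hard direction.} Up to a nonzero constant, $\Hess^1(F)$ is the Jacobian of the polar (gradient) map
\[
\Phi_F: \mathbb{P}^N \dashrightarrow \mathbb{P}^N, \qquad P \mapsto [\partial_0 F(P): \cdots : \partial_N F(P)],
\]
so vanishing of its determinant is equivalent to $\Phi_F$ not being dominant. Let $Y = \overline{\Phi_F(\mathbb{P}^N)}$; then $\dim Y \le N-1$, and the general fiber of $\Phi_F$ has dimension at least one.

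The central ingredient is the classical Gordan--Noether lemma: the closure of a general fiber of $\Phi_F$ is a \emph{linear} subspace $L \subset \mathbb{P}^N$, and each partial derivative $\partial_i F$ is constant along $L$. This is obtained by observing that any tangent vector $v$ to a fiber of $\Phi_F$ at $P$ satisfies $\Hess^1(F)(P)\cdot v = 0$, and propagating this infinitesimal information globally by a bootstrap argument combining the chain rule with Euler's identity. This lemma holds in every dimension and does not use the hypothesis $n \le 4$.

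With the Gordan--Noether lemma in hand, the proof concludes by a case analysis on $n = N+1 \le 4$. For $n=2$, non-dominance of $\Phi_F$ forces it to be constant, whence $F = \ell^d$ for some linear $\ell$. For $n=3$ one obtains a one-parameter family of linear fibers, necessarily lines, covering $\mathbb{P}^2$; since any two lines in $\mathbb{P}^2$ meet, the constancy of the $\partial_i F$ along fibers propagates to a common base point $P_0$, and placing $P_0 = [1:0:0]$ makes each $\partial_i F$ independent of $X_0$, so (by Euler) $F \in K[X_1,X_2]$ and $x_0 \circ F = 0$. For $n=4$ one has a family of linear fibers in $\mathbb{P}^3$, and the analogous argument, though more delicate, again produces a common vertex through which all fibers pass. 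The main obstacle lies precisely in this last case: it is exactly here that the hypothesis $n \le 4$ becomes essential, since for $n \ge 5$ the celebrated Gordan--Noether counterexamples show that the family of linear fibers need not share a base point, and so a dimension-specific incidence argument in $\mathbb{P}^3$ is required.
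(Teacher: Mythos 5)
The paper never proves this statement: it is the classical Gordan--Noether theorem, quoted with references to \cite{GN} and to Watanabe--de Bondt \cite{WatdeB}. So the only question is whether your sketch amounts to a complete proof, and it does not. The easy direction is fine. In the hard direction your ``central ingredient'' --- that in every dimension the closure of a general fiber of the polar map is a linear subspace along which the partials are constant --- is not proved: the one-sentence ``bootstrap argument combining the chain rule with Euler's identity'' is not an argument, and as stated the claim is not even accurate. A general fiber need not be irreducible (for a cone over a hypersurface whose polar map is generically finite of degree $\ge 2$, the general fiber is a union of several lines through the vertex), and the individual partials are not constant along a fiber --- only the projectivized gradient is, which is just the definition of a fiber; for the Perazzo cubic $X_0X_3^2+X_1X_3X_4+X_2X_4^2$ the gradient rescales nontrivially along the fiber lines. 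The genuinely available classical statement is the Gordan--Noether lemma that $F$ and all its partials are invariant under $x\mapsto x+t\,h(x)$, where $h_i=\pi_{y_i}(\nabla F)$ for an algebraic relation $\pi$ among the partials; that lemma has to be set up and proved, not gestured at, and it gives a linear space \emph{inside} the fiber rather than the fiber itself.

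More importantly, the case $n=4$ --- which is the entire content of the theorem, since $n\le 3$ is elementary and $n\ge 5$ is false --- is not argued at all: you write that ``the analogous argument, though more delicate, again produces a common vertex'' and that ``a dimension-specific incidence argument in $\mathbb{P}^3$ is required,'' but you never supply it. In $\mathbb{P}^3$ two general lines do not meet, so the incidence step you used for $n=3$ does not carry over (and even that step has a gap: two distinct fiber lines can only meet at points where all partials vanish, and one still needs an argument that the whole one-parameter family of fibers passes through a single such point rather than enveloping a curve). Settling the $n=4$ case is exactly the hard part of the work of Gordan--Noether and of Watanabe--de Bondt. As it stands, your proposal is an outline that defers precisely the step the theorem is about; since the paper itself simply cites \cite{GN} and \cite{WatdeB}, the appropriate course is either to do the same or to carry out the $n=4$ analysis in full.
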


In particular, this implies by Theorem \ref{Hess} that, setting $d = \deg(F )$, $I =\Ann_RF$ with $I \subset m^2$, and $A = R/I$ for a polynomial ring $R$ of codimension $n\le 4$, the multiplication map $\times \ell ^{d-2} : [A]_1 \rightarrow [A]_{d-1}$ is an isomorphism for a general linear form $\ell \in [A]_1$. We get the following immediate consequence of Theorem \ref{hesscriterium}.

\begin{corollary} \label{wlp slp}
    If $A = R/I$ is an artinian Gorenstein algebra with $h$-vector $(1,4,k,k,4,1)$ then $A$ has the WLP if and only if it has the SLP. 
\end{corollary}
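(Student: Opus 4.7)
The direction SLP $\Rightarrow$ WLP is immediate, so the plan is to establish the converse. Write $A \cong A_F$ for some $F \in [S]_5$. Since $h_1(A) = 4$, the form $F$ is not annihilated by any linear form, hence $F$ is not a cone. The polynomial ring $R$ has exactly $n = 4$ variables, so \Cref{hesscriterium} gives $\det(\Hess^1(F)) \not\equiv 0$. Applying \Cref{Hess} with $i = 1$ (hence $d - 2i = 3$), I conclude that for a general linear form $\ell \in [A]_1$ the multiplication $\times \ell^3 \colon [A]_1 \to [A]_4$ is an isomorphism. This is the crucial input supplied by the codimension-$\le 4$ hypothesis.

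Next I choose $\ell$ general enough to be simultaneously a weak Lefschetz element of $A$ and to realize the Gordan--Noether isomorphism above (both are non-empty Zariski open conditions). From WLP the map $\ell \colon [A]_2 \to [A]_3$ is an isomorphism (equal dimensions $k$), and factoring $\ell^3 \colon [A]_1 \to [A]_4$ as $[A]_1 \xrightarrow{\ell} [A]_2 \xrightarrow{\ell^2} [A]_4$ forces $\ell \colon [A]_1 \to [A]_2$ to be injective, so in particular $k \ge 4$.

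To conclude SLP I must verify that every $\times \ell^k \colon [A]_i \to [A]_{i+k}$ has maximal rank. Since $A$ is Gorenstein, the perfect pairing $[A]_i \times [A]_{d-i} \to [A]_d \cong K$ makes $\times \ell^k\colon [A]_i \to [A]_{i+k}$ transpose to $\times \ell^k\colon[A]_{d-i-k}\to[A]_{d-i}$, so it suffices to verify one map in each dual pair. The three self-dual middle maps are $\ell^5 \colon [A]_0 \to [A]_5$ (nonzero since $F(\ell) \neq 0$ for general $\ell$), $\ell^3 \colon [A]_1 \to [A]_4$, and $\ell \colon [A]_2 \to [A]_3$; all already handled. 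Each remaining off-diagonal map factors as a composition of these two known isomorphisms with max-rank copies of $\times \ell$ coming from WLP; for instance $\ell^2 \colon [A]_1 \to [A]_3$ decomposes as $[A]_1 \xrightarrow{\ell} [A]_2 \xrightarrow{\ell} [A]_3$ (injection followed by isomorphism), and $\ell^4 \colon [A]_0 \to [A]_4$ decomposes as $[A]_0 \xrightarrow{\ell} [A]_1 \xrightarrow{\ell^3} [A]_4$. Each such verification reduces to a short dimension count.

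The only substantive ingredient is \Cref{hesscriterium}, which is already in hand; everything else is routine bookkeeping with Gorenstein duality and the above factorizations, so I do not anticipate any serious obstacle.
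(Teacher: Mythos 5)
Your proof is correct and takes essentially the same route as the paper: the decisive input in both is the Gordan--Noether theorem (Theorem~\ref{hesscriterium}) combined with Theorem~\ref{Hess}, giving that $\times\ell^{3}\colon[A]_1\to[A]_4$ is an isomorphism for a general $\ell$, which together with the WLP isomorphism $\times\ell\colon[A]_2\to[A]_3$ yields the SLP. The only (harmless) difference is that you check the remaining maps $\times\ell^{k}$ by hand via factorization and Gorenstein duality, whereas the paper simply appeals to the Maeno--Watanabe criterion in Theorem~\ref{Hess}, by which bijectivity of the three central maps ($i=0,1,2$) already characterizes strong Lefschetz elements.
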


We recall the following three results that will be used in the next section. 

\begin{proposition} [\cite{MNZ}] \label{MNZ result}
Let $R = K[x_1,x_2,x_3,x_4]$ and let $J = (f,g_1,g_2) \subset R$ be a homogeneous ideal with three minimal generators, where $\deg f = a \geq 2$ and $\deg g_1 = \deg g_2 = b \geq a$. Let $\ell_1, \ell_2 \in R$ be generic linear forms. Then $\dim[R/(J,\ell_1,\ell_2)]_b = a-1$ if and only if $f,g_1,g_2$ have a GCD of degree $a-1$. Otherwise $\dim [R/(J,\ell_1,\ell_2)]_b = a-2$.
\end{proposition}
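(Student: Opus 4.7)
The approach is to work in the two-variable quotient $\bar R = R/(\ell_1,\ell_2)\cong K[y_1,y_2]$, where ideals are controlled by the GCD of a generating set. Write $d=\gcd(f,g_1,g_2)\in R$, of degree $e$; minimal $3$-generation of $J$ implies $g_i\notin(f)$ and hence $e\le a-1$. Factoring $f=df'$, $g_i=dg_i'$ yields $\gcd(f',g_1',g_2')=1$ in $R$, and $\bar J=\bar d\cdot\bar J'$ in $\bar R$ with $\bar J'=(\bar f',\bar g_1',\bar g_2')$. A first key step is to verify that generically $\gcd(\bar f',\bar g_1',\bar g_2')=1$ in $\bar R$: indeed, $V(f',g_1',g_2')\subseteq\mathbb{P}^3$ has codimension $\ge 2$, so a generic line $V(\ell_1,\ell_2)\cong\mathbb{P}^1$ meets it in codimension $\ge 2$ in $\mathbb{P}^1$, which forces emptiness.

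\textbf{Splitting the Hilbert function.} Using multiplication by $\bar d$, one obtains the short exact sequence
\[
0\longrightarrow (\bar R/\bar J')(-e)\xrightarrow{\,\cdot\bar d\,}\bar R/\bar J\longrightarrow \bar R/(\bar d)\longrightarrow 0,
\]
whence $\dim[\bar R/\bar J]_b=\dim[\bar R/(\bar d)]_b+\dim[\bar R/\bar J']_{b-e}$. Since $b\ge a>e$, the first summand equals $e$. Splitting into cases on $e$ will recover the two values in the statement.

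\textbf{The two cases.} If $e=a-1$, then $\bar f'$ is linear and $\bar R/(\bar f')\cong K[z]$ is one-dimensional in every degree. In degree $b-e=b-a+1\ge 1$ the images of $\bar g_1',\bar g_2'$ are scalar multiples of $z^{b-a+1}$, and at least one is nonzero (using the preserved GCD), so $[\bar R/\bar J']_{b-e}=0$ and $\dim[\bar R/\bar J]_b=a-1$. If $e\le a-2$, then $\deg\bar f'=a-e\ge 2$ and $[\bar R/(\bar f')]_{b-e}$ has dimension $a-e$; linear independence of $\bar g_1',\bar g_2'$ modulo $(\bar f')$ would give $\dim[\bar R/\bar J']_{b-e}=a-e-2$ and thus $\dim[\bar R/\bar J]_b=a-2$.

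\textbf{The main obstacle.} The principal difficulty lies in establishing this last linear independence for generic $\ell_1,\ell_2$. Upstairs in $R$, minimal $3$-generation of $J$ passes to $J'$: a relation $g_1'=\alpha f'+\beta g_2'$ would imply $g_1\in (f,g_2)$, contradicting minimality. Hence $g_1',g_2'$ are linearly independent modulo $(f')$ in $R$. For each $[\lambda_1:\lambda_2]\in\mathbb{P}^1$, the form $p=\lambda_1 g_1'+\lambda_2 g_2'\notin (f')$, so $V(p)\cap V(f')$ is a proper subvariety of the hypersurface $V(f')\subseteq\mathbb{P}^3$, of dimension $\le 1$. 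The condition ``$\bar f'\mid\bar p$'' forces all $a-e\ge 2$ intersection points of $V(\ell_1,\ell_2)$ with $V(f')$ to lie on this curve, i.e.\ the line $V(\ell_1,\ell_2)$ is an $(a-e)$-secant of $V(f')\cap V(p)$. Since $(a-e)$-secants of a curve in $\mathbb{P}^3$ form a codimension-$\ge 2$ subvariety of $\mathrm{Gr}(1,\mathbb{P}^3)$, and hence of the $8$-dimensional space $L^2=(R_1)^2$, the incidence variety $Y\subset L^2\times\mathbb{P}^1$ of bad pairs has $\dim Y\le 6+1=7<8$. Its projection to $L^2$ is therefore not dominant, so generic $(\ell_1,\ell_2)$ avoid every bad $[\lambda_1:\lambda_2]$, yielding the desired linear independence and completing the proof.
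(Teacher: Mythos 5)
Note first that the paper does not prove this proposition at all --- it is quoted from \cite{MNZ} --- so your argument has to stand on its own. Most of it does: the reduction to $\bar R=K[y_1,y_2]$, the exact sequence $0\to(\bar R/\bar J')(-e)\to\bar R/\bar J\to\bar R/(\bar d)\to 0$, the computation $\dim[\bar R/(\bar d)]_b=e$, the case $e=a-1$, the generic preservation of $\gcd(f',g_1',g_2')=1$, and the transfer of minimal generation to the statement that $\lambda_1g_1'+\lambda_2g_2'\notin(f')$ for \emph{every} $[\lambda_1:\lambda_2]$ are all correct.

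The gap is in the final dimension count, i.e.\ exactly the step you call the main obstacle. From $p=\lambda_1g_1'+\lambda_2g_2'\notin(f')$ you conclude that $V(p)\cap V(f')$ has dimension $\le 1$ for every $[\lambda_1:\lambda_2]$, hence that the bad lines for each fixed $\lambda$ form a codimension-$\ge 2$ family. This is false: $p\notin(f')$ only says $f'$ does not divide $p$; the two forms may still share a common factor, in which case $V(p)\cap V(f')$ contains a surface. This is compatible with all your hypotheses: take $f=x_1x_2$, $g_1=x_1x_3^2$, $g_2=x_2x_3^2+x_4^3$ (minimally $3$-generated, $\gcd=1$, so $e=0$, $a-e=2$). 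For $p=g_1$ the condition $\bar f\mid\bar p$ on a general line $L$ amounts to $\bar x_2\mid\bar x_3^2$, i.e.\ $L$ meets the line $\{x_2=x_3=0\}$ --- a codimension-\emph{one} condition in $\mathrm{Gr}(1,\mathbb{P}^3)$, so the fiber of $Y$ over this $\lambda$ is $7$-dimensional, not $6$-dimensional, and the bound $\dim Y\le 6+1$ is not justified as stated. The argument can be repaired: the set of $\lambda$ with $\gcd(f',p_\lambda)\ne 1$ is finite (an irreducible factor of $f'$ dividing $p_\lambda$ for infinitely many $\lambda$ would divide both $g_1'$ and $g_2'$, contradicting $\gcd(f',g_1',g_2')=1$), and over each such special $\lambda$ the bad locus of lines still has codimension $\ge 1$ (write $f'=cu$, $p_\lambda=cv$ with $\gcd(u,v)=1$ and $\deg u\ge 1$; for generic $L$ the divisibility forces $L$ to meet the curve $V(u)\cap V(v)$). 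Hence $\dim Y\le\max(6+1,\,7)=7<8$ and the projection is still not dominant, as you want. You should also record the implicit genericity facts your secant count uses, e.g.\ that $L\cap V(f')$ consists of $a-e$ distinct reduced points for generic $L$ (Bertini in characteristic zero), and that the secant bound applies to arbitrary (possibly reducible, possibly planar or linear) one-dimensional loci. As written, though, the crucial independence step is incomplete.
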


\begin{lemma} [\cite{MZ}] \label{MZ Ill}
Let $R = K[x_1,x_2,x_3]$ and let  $R/I$ be an artinian Gorenstein algebra of socle degree $e$. If $d > e/2$, then the elements of any component $[I]_d$ do not all have a common factor of any degree $r\geq 1$.
\end{lemma}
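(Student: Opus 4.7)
The plan is to argue by contradiction. Suppose every form in $[I]_d$ is divisible by a fixed form $g$ of degree $r\ge 1$; then $[I]_d = g\cdot V$ where $V = [I:g]_{d-r}$. If $g\notin I$, applying a generalization of Lemma~\ref{basic seq} (in which the linear form $\ell$ is replaced by the form $g$) shows that the algebra $A' := R/\Ann_R(g\circ F)$ is artinian Gorenstein of socle degree $e-r$, where $F$ is the Macaulay dual generator of $A$. Counting dimensions in the identity $[I]_d = g\cdot [I:g]_{d-r}$ yields
\[
\dim_K[A']_{d-r} \;=\; h_d + \binom{d-r+2}{2} - \binom{d+2}{2}.
\]
The case $g\in I$ can be dispatched separately: then $[I]_d = gR_{d-r}$ forces $h_d=\binom{d+2}{2}-\binom{d-r+2}{2}$, which can be analyzed via a direct calculation reducing to a residual setting of codimension $\le 2$.

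The key geometric observation is that the hypothesis is equivalent to the injectivity of $R_d/gR_{d-r}\hookrightarrow A_d$. In the linear case $r=1$, after a coordinate change making $g=x_1$, this says that the $(d+1)$-dimensional subspace $K[x_2,x_3]_d$ injects into $A_d$. A persistence argument in the polynomial ring $K[x_2,x_3]$ then yields $K[x_2,x_3]_j\hookrightarrow A_j$ for all $0\le j\le d$, so in particular $h_j\ge j+1$ throughout this range. Dually, via the perfect Gorenstein pairing, one obtains the complementary identity $\dim_K(0:_A g)_{e-d}=\binom{d+2}{2}-\binom{d-r+2}{2}$.

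To reach the contradiction, combine the inequality $h'_i\le h_i$ (from the surjection $A\twoheadrightarrow A'$) with the symmetries $h_d=h_{e-d}$ and $h'_{d-r}=h'_{e-d}$. Substituting these into the dimension identity above shows that the $h$-vector of $A$ must decrease strictly between two indices lying in its nondecreasing half (specifically between $e-d$ and $e-d+r$, both $\le\lfloor e/2\rfloor$ when $d>(e+r)/2$), contradicting Stanley's unimodality theorem for codimension three artinian Gorenstein algebras. For the boundary range $e/2<d\le (e+r)/2$, the unimodality estimate degenerates to equality, and a finer analysis is needed: one uses the injection $K[x_2,x_3]_d\hookrightarrow A_d$ and the formula $\dim_K(0:_A g)_{e-d}=\binom{d+2}{2}-\binom{d-r+2}{2}$ to show that $g$ annihilates all of $A$ in sufficiently high degree, and then invokes the perfect Gorenstein pairing $A_r\times A_{e-r}\to A_e\cong K$ to conclude that $g$ must lie in $I$, contradicting our standing assumption $g\notin I$. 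The main obstacle is this boundary analysis when $g$ is irreducible of degree $r\ge 2$: the reduction to a linear factor via change of coordinates is unavailable, and one has to carry out the persistence argument directly in the two-dimensional graded ring $R/(g)$, exploiting the existence of non-zero-divisors of $R/(g)$ in every positive degree.
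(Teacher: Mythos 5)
The paper offers no argument of its own here (it simply cites \cite{MZ}), so I am judging your proposal on its own terms. Your general framework is reasonable: pass to $A'=R/(I:g)=R/\Ann_R(g\circ F)$, which is Gorenstein of socle degree $e-r$ when $g\notin I$, record the identity $h'_{d-r}=h_d-\bigl(\binom{d+2}{2}-\binom{d-r+2}{2}\bigr)$, and play the symmetries of $h$ and $h'$ against unimodality of codimension-three Gorenstein $h$-vectors. But even in your ``main case'' the contradiction is not obtained by the substitution you describe: from $h'_{d-r}=h'_{e-d}$, $h_d=h_{e-d}$ and $h'_i\le h_i$ alone you only get $\binom{d+2}{2}-\binom{d-r+2}{2}\ge 0$. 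To produce the claimed strict decrease of $h$ between $e-d$ and $e-d+r$ you need a second relation comparing $h_{e-d+r}$ with $h'_{e-d}$ (e.g.\ via $g\cdot(I:g)_{e-d}\subseteq I_{e-d+r}$, or via the persistence $I_j\subseteq(g)$ for $j\le d$ applied at $j=e-d+r$), and this step is absent. Also the parenthetical claim that both indices $e-d$ and $e-d+r$ lie in the nondecreasing half when $d>(e+r)/2$ is wrong (that needs $d\ge e/2+r$); the unimodality contradiction still goes through without it, but only when $2d>e+r$. Your dispatch of the case $g\in I$ (``a residual setting of codimension $\le 2$'') is also only a gesture, though that case is genuinely easy: $A$ is then a quotient of $R/(g)$, so $h_d=\dim[R/(g)]_d>\dim[R/(g)]_{e-d}\ge h_{e-d}=h_d$.

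The serious gap is the range $e/2<d\le(e+r)/2$, which you label a ``boundary'' and explicitly leave unresolved. This range is not marginal: it is exactly where the lemma is used in this paper, namely $e=2m+1$ and $d=m+1$, where $2d=e+1\le e+r$ for \emph{every} $r\ge1$, so your unimodality argument on $A$ never applies there. Your proposed remedy --- ``show that $g$ annihilates all of $A$ in sufficiently high degree'' and then conclude $g\in I$ from the pairing $A_r\times A_{e-r}\to A_e$ --- is unsupported: nothing you state implies $g\,A_{e-r}=0$, and you yourself concede the obstacle when $g$ is irreducible of degree $r\ge2$. A workable completion along your own lines exists, but it runs through $A'$ rather than $A$: since the common factor persists to all degrees $j\le d$, one has $h_j=h'_{j-r}+\dim[R/(g)]_j$ for $j\le d$; combining this at two degrees with the symmetries of $h$ and $h'$ forces either a negative entry of $h'$ or a strict decrease of $h'$ inside its nondecreasing half (for instance $h'_{d-r}<h'_{e-d-r}$ with both indices at most $(e-r)/2$ when $2d\le e+r$), contradicting unimodality of the codimension-three Gorenstein algebra $A'$. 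As written, your proof establishes the statement only when $2d>e+r$ (and even then with the missing comparison noted above), so it does not yet prove the lemma.
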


\begin{lemma}(\cite[Proposition 3.3]{AADFIMMMN}) \label{AADFIMMMN prop}
Let $A = R/I$ be an artinian Gorenstein algebra  with Hilbert function
\[
h_0 < h_1 < \dots < h_{t-1} < h_t = \dots = h_s > h_{s+1} > h_{s+2} > \dots > h_{d-1} > h_d
\]
where $s \geq t+2$ and $h_s \leq s$. Then $A$ has the WLP.
\end{lemma}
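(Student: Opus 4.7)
The plan is to combine Green's theorem, Gorenstein duality, and the auxiliary algebras $B$ and $C$ of \Cref{ABC} to show directly that multiplication by a general linear form $\ell$ has maximal rank in every degree. The first observation is that the Gorenstein symmetry $h_i = h_{d-i}$ together with the prescribed shape forces $s + t = d$, because the strict-decrease region $[s, d]$ must mirror the strict-increase region $[0, t]$.

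The first main step is to apply Green's theorem (\Cref{thm:useful}(ii)) at degree $s$. Since $h_s \leq s$, the $s$-binomial expansion is
\[
h_s = \binom{s}{s} + \binom{s-1}{s-1} + \dots + \binom{s - h_s + 1}{s - h_s + 1},
\]
and applying the operator $(\cdot)_0^{-1}$ sends each summand $\binom{m}{m}$ to $\binom{m-1}{m} = 0$. Hence $\dim [C]_s = 0$, which propagates to $\dim [C]_j = 0$ for all $j \geq s$ because $C$ is a graded algebra. In particular $\times \ell \colon [A]_{s-1} \to [A]_s$ is surjective and, since $h_{s-1} = h_s$, bijective.

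To handle the flat region I pass to $B = R/(I:\ell)$, which by \Cref{basic seq} is Gorenstein of socle degree $d - 1$ for general $\ell$. The short exact sequence of \Cref{basic seq} at degree $s$ gives $\dim [B]_{s-1} = h_s - \dim [C]_s = h_s$, and the Gorenstein symmetry of $B$ then yields $\dim [B]_t = \dim [B]_{d-1-t} = \dim [B]_{s-1} = h_s$ because $d - 1 - t = s - 1$. Reading the same sequence at degree $t + 1$ now produces $\dim [C]_{t+1} = h_{t+1} - \dim [B]_t = 0$, using the flat-region identity $h_{t+1} = h_s$, which is valid because $s \geq t + 2$. Propagating once more delivers $\dim [C]_j = 0$ for all $j \geq t + 1$, hence surjectivity of $\times \ell \colon [A]_{j-1} \to [A]_j$ in all degrees $j \geq t + 1$.

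The final step is duality on $A$. The Gorenstein pairing $[A]_i \times [A]_{d-i} \to K$ makes $\times \ell \colon [A]_i \to [A]_{i+1}$ adjoint to $\times \ell \colon [A]_{d-i-1} \to [A]_{d-i}$, so the surjectivity just obtained in all degrees $\geq t + 1$ translates to injectivity of $\times \ell$ in all degrees $\leq s - 1$. Together these cover every degree: injectivity on $[0, s-1]$ gives the required maximal rank in the strict-increase region and bijectivity throughout the flat region, while surjectivity on $[t, d]$ gives the required rank in the strict-decrease region. The main technical subtlety is the bookkeeping between the two Gorenstein structures $A$ and $B$ and the two vanishing statements at degrees $s$ and $t + 1$; once $s + t = d$ is in hand, no induction on socle degree is needed and two applications of \Cref{basic seq} suffice.
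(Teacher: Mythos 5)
Your proof is correct. Note that the paper itself does not prove this lemma — it is imported verbatim from \cite[Proposition 3.3]{AADFIMMMN} — so there is no internal argument to compare against; what you have written is a self-contained proof using only tools already set up in Section~\ref{S2}. The steps all check out: Gorenstein symmetry does force $t+s=d$; Green's theorem at degree $s$ with $h_s\le s$ gives $[C]_s=0$ (the same computation as Lemma~\ref{lem:hilb is two}, just with all terms $\binom{m}{m}$ in the expansion, so the bound is $0$ rather than $1$); the exact sequence of Lemma~\ref{basic seq} then gives $\dim[B]_{s-1}=h_s$, the Gorenstein symmetry of $B$ (socle degree $d-1$, valid since $\ell\circ F\neq 0$ for general $\ell$) gives $\dim[B]_t=\dim[B]_{d-1-t}=\dim[B]_{s-1}=h_s$, and reading the sequence in degree $t+1$ yields $[C]_{t+1}=0$, hence surjectivity of $\times\ell$ into every degree $\ge t+1$; duality on $A$ converts this into injectivity in every degree $\le s-1$, and since $t\le s-1$ every degree is covered with the correct direction of maximal rank. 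All of these are open conditions satisfied by a single general $\ell$, so the conclusion really is the WLP. Two minor observations rather than objections: your argument never uses the hypothesis $s\ge t+2$, only that the plateau has length at least two (i.e.\ $s\ge t+1$), so you have in fact proved a formally stronger statement than the one quoted; and it gives bijectivity of $\times\ell$ throughout the flat region, slightly more than maximal rank, which is consistent with how the lemma is used in the paper.
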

The following consequence of the Snake Lemma will be used extensively. 

\begin{lemma}\label{snake}
    Let $A$ be an artinian Gorenstein algebra and let $f$ be a form of degree $s$ in $A$. Let $B = A/(0:f)$ and $C = A/(f)$. If for some degree $i$ the multiplication maps by a linear form $\ell$ 
    \[
    \times\ell\colon[B]_{i-s}\longrightarrow[B]_{i+1-s}\quad\text{and}\quad
    \times\ell\colon[C]_{i}\longrightarrow[C]_{i+1}
    \]
    are both injective or both surjective, then so is the multiplication map
    \[
    \times\ell\colon[A]_{i}\longrightarrow[A]_{i+1}.
    \] 
\end{lemma}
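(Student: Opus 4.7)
The plan is to recognize that the data $B = A/(0:f)$ and $C = A/(f)$ fit together into a short exact sequence of graded $A$-modules
\[
0 \longrightarrow B(-s) \xrightarrow{\ \cdot f\ } A \longrightarrow C \longrightarrow 0,
\]
since multiplication by $f$ on $A(-s)$ has kernel $(0:f)(-s)$ and image $(f)$, so that $A/(0:f)$ shifted by $s$ embeds into $A$ via $\bar a \mapsto af$, with cokernel $A/(f) = C$. This is the same construction as in \Cref{basic seq} but with a form of arbitrary degree $s$ in place of a linear form.

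Next, I would apply multiplication by $\ell$ to this sequence in degrees $i$ and $i+1$ to obtain the commutative diagram with exact rows
\[
\begin{CD}
0 @>>> [B]_{i-s} @>>> [A]_{i} @>>> [C]_{i} @>>> 0 \\
@. @VV{\times \ell}V @VV{\times \ell}V @VV{\times \ell}V \\
0 @>>> [B]_{i+1-s} @>>> [A]_{i+1} @>>> [C]_{i+1} @>>> 0.
\end{CD}
\]

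Finally, I would invoke the Snake Lemma, which produces the six-term exact sequence
\[
0 \to \ker(\times\ell|_{B}) \to \ker(\times\ell|_{A}) \to \ker(\times\ell|_{C}) \to \coker(\times\ell|_{B}) \to \coker(\times\ell|_{A}) \to \coker(\times\ell|_{C}) \to 0.
\]
If the outer two vertical maps are injective, the kernels on the left and right vanish, forcing $\ker(\times\ell|_{A}) = 0$; if they are both surjective, the cokernels on the left and right vanish, forcing $\coker(\times\ell|_{A}) = 0$. Either way the desired conclusion about the middle multiplication map follows. There is no real obstacle here: the only subtle point is to verify that $A/(0:f) \cong (f)(s)$ so that the short exact sequence is genuinely available in the graded category, after which the argument is a direct diagram chase.
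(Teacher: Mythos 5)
Your proposal is correct and follows essentially the same route as the paper: identify $(f)\cong B(-s)$ to get the short exact sequence $0\to B(-s)\xrightarrow{\times f} A\to C\to 0$, pass to the commutative diagram in degrees $i$ and $i+1$, and conclude by the Snake Lemma. Your explicit six-term kernel--cokernel sequence just spells out the final step the paper leaves as a one-line appeal to the Snake Lemma.
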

\begin{proof}
Observe that as a graded $A$-module $(f)\cong B(-s)$, so we have a short exact sequence
\[
0\longrightarrow B(-s)\overset{\times f}\longrightarrow A\longrightarrow C\longrightarrow 0.
\]    
This gives rise to the following commutative diagram:
\begin{equation}\label{eq: CDbis}
      \begin{tikzcd}
        0\arrow{r} & {[B]}_{i-s}\arrow{r}{\times f}\arrow{d}{\times\ell} & {[A]}_{i} \arrow{r} \arrow{d}{\times \ell} & {[C]}_{i} \arrow{r}\arrow{d}{\times \ell} &0\\%
0\arrow{r} &{[B]}_{i+1-s}\arrow{r}{\times f}&{[A]}_{i+1} \arrow{r}& {[C]}_{i+1}\arrow{r} &0
      \end{tikzcd}
\end{equation}
    coming from the short exact sequence.
    By the Snake Lemma, the middle vertical map has maximal rank if  the leftmost and the rightmost vertical maps are either both injective or both surjective.
\end{proof}
%%%%%%%%%%%%%%%%%%%%%%%%%%%%%%%%%%%%%%%%%%%%%%%%%%

\section{Three families satisfying the WLP}\label{S3}

The proof of our main theorem builds on an induction where there are several important base cases. In this section we will establish the WLP for the three families of artinian Gorenstein algebras of odd socle degree $d = 2m+1$  with $h$-vectors 
\begin{itemize}
    \item $(1,3,5,7,9,\dots,2m-1,2m+1,2m+1,2m-1,\dots,9,7,5,3,1)$ (Theorem~\ref{135})
    \item $(1,3,6,8,10,\dots,2m,2m+2,2m+2,2m,\dots,10,8,6,3,1)$ (Theorem~\ref{136})
    \item $(1,4,6,8,10,\dots,2m,2m+2,2m+2,2m,\dots,10,8,6,4,1)$ (Theorem~\ref{keyresult})
\end{itemize}
At the end of the section we will also establish the WLP for artinian Gorenstein algebras of $h$-vector $(1,5,6,6,5,1)$, the SLP for artinian Gorenstein algebras of $h$-vectors $(1,4,4,\dots,4,1)$ and $(1,5,5,\dots,5,1)$,
and for artinian Gorenstein algebras of codimension $4$ and socle degree $4$. These results will be used for the base case in the induction over the socle degree. 
\begin{theorem} \label{135}
    Any artinian Gorenstein algebra $A$ of codimension $3$ of odd socle degree $2m+1$ and $h$-vector 
    \[
    (1, 3, 5, 7, 9, \cdots, 2m-1, 2m+1, 2m+1, 2m-1, \cdots ,9, 7, 5, 3, 1)
    \]
    saytisfies WLP. 
\end{theorem}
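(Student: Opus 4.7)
The plan is to analyze the short exact sequence
\[
0 \to B(-1) \to A \to C \to 0
\]
from \Cref{basic seq}, with $B = A/(0:\ell)$ and $C = A/(\ell)$ for a general linear form $\ell$, and to combine Green's restriction theorem with the common-factor lemma of Migliore and Zanello. I would first dispose of the base cases $m \le 2$: here the socle degree is at most $5$ and the Sperner number is at most $5$, so WLP follows directly from the codimension three, Sperner $\le 6$ result of \cite{AAISY} cited in the introduction. From here on, assume $m \ge 3$.

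By Gorenstein duality it suffices to show that $\times\ell : [A]_i \to [A]_{i+1}$ is injective for all $0 \le i \le m$. For $1 \le i \le m-1$ this is a quick Green/Macaulay argument: the $j$-binomial expansion $2j+1 = \binom{j+1}{j} + \binom{j}{j-1}$, valid for $j \ge 2$, combined with Green's \Cref{thm:useful}(ii) yields $\dim[C]_j \le 2$, while the exact sequence gives $\dim[C]_j \ge \dim[A]_j - \dim[A]_{j-1} = 2$. Equality in every such degree is equivalent to injectivity of multiplication by $\ell$ on $[A]_{j-1}$.

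The crux is the middle map $\times\ell : [A]_m \to [A]_{m+1}$. Green in degree $m+1$ only gives $\dim[C]_{m+1} \le 1$, so the real task is to exclude the possibility $\dim[C]_{m+1} = 1$. If this bad case occurs, the Hilbert function of $B$ is forced to be $(1,3,5,\ldots,2m-1,2m,2m-1,\ldots,5,3,1)$, a Gorenstein algebra of socle degree $2m$ in three variables, and the Hilbert function of $C$ is pinned down to be $(1,2,2,\ldots,2,1,0,\ldots)$ with final $1$ in degree $m+1$. I would then apply \Cref{MZ Ill} to $A$ in degree $d = m+1 > (2m+1)/2$ to conclude that $[I]_{m+1}$ has no common factor, which constrains the interaction between the unique quadric spanning $[I]_2$ and the higher degree generators of $I$.

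The main obstacle is unquestionably this final no-middle-kernel step. I expect to close it by splitting on the geometric type of the quadric spanning $[I]_2$---an irreducible conic, a union of two distinct lines, or a double line---and in each case using \Cref{MZ Ill} together with the Buchsbaum--Eisenbud Pfaffian structure of codimension three Gorenstein ideals, and the WLP machinery developed in \cite{MNZ, MZ} that becomes available once the socle degree is at least $7$, to rule out the existence of a nonzero degree $m$ element annihilated by a generic linear form. Note that the inductive reduction to the shorter algebra $B$ is not a direct application of the theorem being proved, because $B$ has a different Hilbert function; this is why the results from \cite{MNZ, MZ} are essential in order to close the argument.
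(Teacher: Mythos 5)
Your reductions are fine as far as they go: the base cases, the duality reduction to injectivity in degrees $\le m$, and the Green-type computation showing $\dim_K[C]_j\le 2$ (hence injectivity of $\times\ell$ on $[A]_{j-1}$ for $j\le m$) are all correct, and your determination of the forced Hilbert functions of $B$ and $C$ in the putative bad case $\dim_K[C]_{m+1}=1$ is accurate. But the proof stops exactly where the theorem lives. Excluding $\dim_K[C]_{m+1}=1$, i.e.\ proving bijectivity of $\times\ell\colon[A]_m\to[A]_{m+1}$, is the entire content of the statement, and for this you offer only a plan (``I expect to close it by splitting on the geometric type of the quadric \dots using the machinery of \cite{MNZ, MZ}''), not an argument. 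Observing via \Cref{MZ Ill} that $[I]_{m+1}$ has no common factor does not by itself rule out a kernel element in degree $m$, and it is not clear the tools you name would assemble into a proof as you describe: \Cref{MNZ result} is about three-generator ideals in \emph{four} variables modulo two general linear forms, which is the device used in the paper for the $(1,3,6,8,\dots)$ family, not this one. So there is a genuine gap at the crucial step.

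The idea you are missing is structural. By the Buchsbaum--Eisenbud structure theorem the minimal generation of $I$ is pinned down: either $I$ is a complete intersection of type $(2,m+1,m+1)$, in which case WLP is known by \cite{HMNW}, or $I$ has five generators in degrees $2,m+1,m+1,m+2,m+2$. In the latter case let $J\subseteq I$ be the ideal generated by the quadric and the two forms of degree $m+1$; these arise as the maximal minors of a $2\times 3$ submatrix of the skew-symmetric resolution matrix, and \Cref{MZ Ill} (applied in degree $m+1>\tfrac{2m+1}{2}$) guarantees they have no common factor, so $J$ is a perfect codimension-$2$ (Hilbert--Burch) ideal and $R/J$ is a one-dimensional Cohen--Macaulay ring, on which a general linear form is a nonzerodivisor. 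Since the remaining generators of $I$ have degree $m+2$, one has $[I]_j=[J]_j$ for $j\le m+1$, so $\times\ell\colon[A]_m\to[A]_{m+1}$ coincides with the corresponding map on $R/J$ and is therefore injective, hence bijective by equality of dimensions. This single observation replaces the entire case analysis you propose.
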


\begin{proof}
    Let $R = K[x_1,x_2,x_3]$  and $A = R/I$. The cases where $m\le 2$ are known from before \cite[Corollary 3.12]{BMMNZ}.  There are only two possibilities for the resolution of $A$ over $R$ by the Buchsbaum-Eisenbud structure theorem. Either  the ideal is generated by one quadric and two forms of degree $m+1$ or by one quadric, two forms of degree $m+1$ and two forms of degree $m+2$. In the first case, $I$ is a complete intersection which we know forces WLP by \cite{HMNW}. In the second case, there is a $2\times 3$-submatrix of the $5\times 5$ Buchsbaum-Eisenbud matrix whose maximal minors give the first three generators. Denote this ideal by $J$. By Lemma~\ref{MZ Ill} we have that there is no common factor among the forms in $[I]_{m+1}$. This means that $J$ is a perfect ideal of codimension $2$ and we conclude that the multiplication by a general linear form $\ell$ from  $[A]_m$ to $[A]_{m+1}$ is bijective since it coincides with the multiplication on $R/J$ in these degrees.
\end{proof}

\begin{theorem} \label{136}
    Any artinian Gorenstein algebra $A$ of codimension $3$ of odd socle degree $2m+1$ and $h$-vector 
    \[
    (1, 3, 6, 8, 10, \cdots, 2m, 2m+2, 2m+2, 2m, \cdots ,10, 8, 6, 3, 1)
    \]
    saytisfies WLP. 
\end{theorem}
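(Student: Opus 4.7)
The plan is as follows. For $m \le 2$ the socle degree is at most $5$, so the statement follows from \cite[Corollary 3.12]{BMMNZ}, which gives the SLP (hence the WLP) for every codimension-three artinian Gorenstein algebra of socle degree $\le 6$.

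For $m \ge 3$, let $A = R/I$ with $R = K[x_1,x_2,x_3]$. I would first analyze the low-degree generators of $I$. From the Hilbert function, $\dim[I]_3 = 2$, so $I$ has two minimal cubic generators $f_1, f_2$. For $m = 3$, either $A$ is a complete intersection of type $(3,3,4)$ --- in which case the WLP is immediate from \cite{HMNW} --- or $f_1, f_2$ share a common factor. For $m \ge 4$, the count $\dim[I]_4 = 5$ forces $f_1, f_2$ to share a nontrivial common factor, since coprime cubics would give $\dim[(f_1,f_2)]_4 = 6 > 5$; a purely linear common factor $L$ with coprime quadratic quotients $q_1, q_2$ is ruled out by the same count since $\dim[L(q_1,q_2)]_4 = 6$. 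In every remaining case, therefore, $f_1, f_2$ share a common quadric factor $q$, so one may write $f_i = qL_i$ with $L_1, L_2$ independent linear forms, and $(f_1,f_2) = q \cdot (L_1,L_2)$ matches the Hilbert function of $A$ in all degrees $\le m$. Applying Lemma~\ref{MZ Ill} in degree $m+1$ (which satisfies $m+1 > (2m+1)/2$) shows that $[I]_{m+1}$ has no common factor, and a dimension count produces exactly two new minimal generators $g_1, g_2$ of degree $m+1$.

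To prove the WLP, by Gorenstein symmetry it suffices to show that $\times \ell : [A]_m \to [A]_{m+1}$ is bijective for a general linear form $\ell$; via Lemma~\ref{basic seq}, this amounts to $H_C(m+1) = 0$, where $C = A/\ell A$ is a standard graded algebra in the two-variable ring $\bar R = R/(\ell) \cong K[y_1,y_2]$. For generic $\ell$ the images $\bar L_1, \bar L_2$ span $\bar R_+$, so $\bar I_{m+1} \supseteq \bar q \cdot \bar R_{m-1}$, which has codimension exactly $2$ in $\bar R_{m+1}$ (of dimension $m+2$). Hence $H_C(m+1) = 0$ reduces to showing that the images $\bar g_1, \bar g_2$ of the two new generators are linearly independent modulo $\bar q \bar R_{m-1}$.

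The principal obstacle is this last independence statement. Lemma~\ref{MZ Ill} only gives that the whole of $[I]_{m+1}$ has trivial GCD, which is strictly weaker than requiring two chosen generators to remain independent modulo $qR_{m-1}$. I would close the gap using the techniques of \cite{MNZ} and \cite{MZ} cited in the introduction; in particular, Proposition~\ref{MNZ result} applied to the three-generator subideal $(f_1, g_1, g_2)$ (of degrees $(3, m+1, m+1)$, matching the hypothesis $(a, b, b)$ with $a = 3$, $b = m+1$) should give sharp control on $\dim[\bar R/\overline{(f_1,g_1,g_2)}]_{m+1}$ in terms of the GCD of $f_1, g_1, g_2$, after which the additional contribution of $\bar f_2 = \bar q \bar L_2$ to $\bar I$ completes the dimension count. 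The overall structure parallels that of Theorem~\ref{135}, with the quadric factor $q$ playing the role of the common factor there.
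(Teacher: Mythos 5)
Your reduction for $m\ge 4$ is sound and even pleasantly elementary: the count $\dim[I]_4=5$ does force $f_1=qL_1$, $f_2=qL_2$, and then $[I]_j=q\,[(L_1,L_2)]_{j-2}$ for $2\le j\le m$ by comparing Hilbert functions, which replaces the paper's appeal to maximal growth (BGM/Gotzmann). But there are two genuine gaps. First, your dichotomy for $m=3$ (``either a complete intersection of type $(3,3,4)$ or $f_1,f_2$ share a common factor'') is false: since $\dim[I]_4=7$ while coprime cubics only give $6$, the ideal can have two coprime cubics, one quartic generator \emph{and} two quintic generators (the second Betti table in Table~\ref{tab:betti_tables}), which is not a complete intersection and has no common factor, so your subsequent argument never applies to it. The paper treats this case separately: the two cubics and the quartic are the maximal minors of a $3\times 2$ matrix, Lemma~\ref{MZ Ill} (applied in degree $4>7/2$) rules out a common factor, so these minors generate a height-two perfect (Hilbert--Burch) ideal that agrees with $I$ through degree $4$; a general linear form is then a nonzerodivisor on that depth-one quotient, giving bijectivity of $\times\ell\colon[A]_3\to[A]_4$.

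Second, and more seriously, the step you yourself identify as the principal obstacle is not closed by the tool you propose. Applying Proposition~\ref{MNZ result} to $(f_1,g_1,g_2)$ with $a=3$ gives, in the branch where $f_1,g_1,g_2$ have no GCD of degree $2$, only that $\bar q\bar L_1\bar R_{m-2}+\langle\bar g_1,\bar g_2\rangle$ is a hyperplane of $\bar R_{m+1}$; since $\bar f_2$ can contribute at most one further dimension, whether it actually does -- i.e.\ whether $\bar q\bar L_2\bar R_{m-2}$ escapes that hyperplane -- is exactly the independence statement you need, and the proposition with $a=3$ says nothing about it (nor does the absence of a common factor of $[I]_{m+1}$ exclude, a priori, a relation $\alpha\bar g_1+\beta\bar g_2\in\bar q\bar R_{m-1}$ not induced by a common factor). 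The missing idea in the paper is to quotient by the quadric $q$ itself: by Lemma~\ref{snake} applied with the degree-two form $q$ (using that $R/(I:q)$ has Hilbert function $(1,1,1,\dots)$), maximal rank of $\times\ell\colon[A]_m\to[A]_{m+1}$ is equivalent to maximal rank for $R/(I,q)$, whose relevant generators are $q,g_1,g_2$; then Proposition~\ref{MNZ result} with $a=2$ gives the sharp dichotomy that either the generic reduction vanishes in degree $m+1$ (so WLP holds) or $q,g_1,g_2$ have a common \emph{linear} factor, which would divide all of $[I]_{m+1}=q[(L_1,L_2)]_{m-1}+\langle g_1,g_2\rangle$ and contradict Lemma~\ref{MZ Ill}. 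Without this reduction (or an equivalent sharpening), your dimension count cannot conclude.
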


\begin{proof}
    The cases $m=1,2$ were proven in \cite{BMMNZ}. Consider the case $m=3$. 
We have the $h$-vector $(1,3,6,8,8,6,3,1)$ which is the Hilbert function of a complete intersection of type (3,3,4). There are four possible Betti tables as displayed in Table~\ref{tab:betti_tables}.

\begin{table}[ht]
    \centering
\begin{tabular}{|ccc|ccc|}\hline &&&&&\\
(1)&$\begin{matrix}
\text{total:}&1&3&3&1\\\text{0:}&1&\text{.}&\text{.}&\text{.}\\\text{1:}&\text{.}&\text{.}&\text{.}&\text{.}\\\text{2:}&\text{.}&2&\text{.}&\text{.}\\\text{3:}&\text{.}&1&\text{.}&\text{.}\\\text{4:}&\text{.}&\text{.}&1&\text{.}\\\text{5:}&\text{.}&\text{.}&2&\text{.}\\\text{6:}&\text{.}&\text{.}&\text{.}&\text{.}\\\text{7:}&\text{.}&\text{.}&\text{.}&1\\\end{matrix}$  &&(2)& 
$\begin{matrix}
\text{total:}&1&5&5&1\\\text{0:}&1&\text{.}&\text{.}&\text{.}\\\text{1:}&\text{.}&\text{.}&\text{.}&\text{.}\\\text{2:}&\text{.}&2&\text{.}&\text{.}\\\text{3:}&\text{.}&1&2&\text{.}\\\text{4:}&\text{.}&2&1&\text{.}\\\text{5:}&\text{.}&\text{.}&2&\text{.}\\\text{6:}&\text{.}&\text{.}&\text{.}&\text{.}\\\text{7:}&\text{.}&\text{.}&\text{.}&1\\\end{matrix}$  
&     \\ &&&&&\\\hline &&&&&\\
(3)&$\begin{matrix}     
\text{total:}&1&5&5&1\\\text{0:}&1&\text{.}&\text{.}&\text{.}\\\text{1:}&\text{.}&\text{.}&\text{.}&\text{.}\\\text{2:}&\text{.}&2&1&\text{.}\\\text{3:}&\text{.}&2&\text{.}&\text{.}\\\text{4:}&\text{.}&\text{.}&2&\text{.}\\\text{5:}&\text{.}&1&2&\text{.}\\\text{6:}&\text{.}&\text{.}&\text{.}&\text{.}\\\text{7:}&\text{.}&\text{.}&\text{.}&1\\\end{matrix}$   &&(4)&
$\begin{matrix}
\text{total:}&1&7&7&1\\\text{0:}&1&\text{.}&\text{.}&\text{.}\\\text{1:}&\text{.}&\text{.}&\text{.}&\text{.}\\\text{2:}&\text{.}&2&1&\text{.}\\\text{3:}&\text{.}&2&2&\text{.}\\\text{4:}&\text{.}&2&2&\text{.}\\\text{5:}&\text{.}&1&2&\text{.}\\\text{6:}&\text{.}&\text{.}&\text{.}&\text{.}\\\text{7:}&\text{.}&\text{.}&\text{.}&1\\\end{matrix}$  &\\&&&&&\\\hline
\end{tabular}
\caption{The four possible Betti tables for artinian Gorenstein algebras with $h$-vector $(1,3,6,8,8,6,3,1)$}
\label{tab:betti_tables}
\end{table}

The first is a complete intersection, which we know forces WLP \cite{HMNW}. The third and fourth force the cubic generators to have a common factor, $Q$ {\red{$q$}}, of degree 2 and this will be dealt with in the general argument. For now we focus on the second Betti table.

The Buchsbaum-Eisenbud matrix for this algebra has a submatrix corresponding to the degrees 2 and 3 rows. This corresponds to a map
\[
R(-5)^2 \rightarrow R(-3)^2 \oplus R(-4).
\]
We claim that this submatrix is a Hilbert-Burch matrix, i.e. that the above map gives a free resolution of a zero-dimensional subscheme $Z$ of $\mathbb P^2$ of degree 8. The ideal of maximal minors define a subscheme of $\mathbb P^2$, but Lemma \ref{MZ Ill} guarantees that there is no common factor, so the three minors must define a codimension 2 scheme, and we are done. But this means that through degree 4, the Gorenstein ideal $I$ coincides with the coordinate ring of an algebra with depth 1, so a general linear form gives an isomorphism from degree 3 to degree 4. This settles the case of the second Betti table.

We will return to the third and fourth Betti tables shortly, but we make an observation about the case $m \geq 4$. Now the Hilbert function has the form given in the statement of the proposition. The growth from degree 3 to degree 4 is now maximal according to Macaulay, and in fact it continues to be maximal from any degree $s$ to degree $s+1$ for $s \leq m-1$. Thanks to \cite{BGM} (or Gotzmann), this means that the base locus (as a scheme) of any component of $I$ in degree $\leq m$ has a curve of degree 2 as a component (not necessarily irreducible and not unmixed) with Hilbert polynomial $2t+2$. Since $R$ has three variables, this means that all components of $I$ in degree $\leq m$ have a common factor, $q$, of degree 2. This was true also in the remaining cases for $m=3$, which we now bring back into the picture.

Consider the short exact sequence
\[
0 \rightarrow R/(I:q)(-2) \stackrel{\times q}{\longrightarrow} R/I \rightarrow R/(I,q) \rightarrow 0. 
\]
The cubic generators of $I$ have the form $qx, qy$ after a change of variables, so $I:q$ has two linear forms. Thus the Hilbert function of $R/(I:q)$ is $(1,1,1, \dots)$ and the Hilbert function of $R/(I,q)$ must be
\[
(1,3,5,7,\dots, 2m-1, 2m+1, 2m+1, \dots)
\]
where the two occurrences of $2m+1$ occur in degrees $m$ and $m+1$.

Lemma \ref{snake} gives that for a general linear form $\ell$, the multiplication by $\ell$ from degree $m$ to degree $m+1$ in $R/I$ has maximal rank if and only if the multiplication for $R/(I,q)$ has maximal rank from degree $m$ to degree $m+1$. Suppose this is not the case here and consider the algebra $B = R/(I,q)$. This ideal has one generator (namely $q$) in degree 2 and two in degree $m+1$, say $g_1, g_2$. Let $J = (q,g_1,g_2)$ and consider $R/(J,\ell)$.

Thinking of $q,g_1,g_2$ as being in $K[x_1,x_2,x_3,x_4]$, let $\ell_1$ be a general linear form (which without loss of generality we can think of as being $x_4$). We have that $$R/(J,\ell) \cong  K[x_1,x_2,x_3,x_4]/(J,\ell_1,\ell_2). $$ From the exact sequence
\[
0 \rightarrow R/(J:\ell)(-1) \stackrel{\times \ell}{\longrightarrow} R/J \rightarrow R/(J,\ell) \rightarrow 0
\]
failure of maximal rank gives that $\dim [R/(J,\ell)]_{m+1} \geq 1$ and in fact it must be exactly 1. Letting $a = 2 = \deg q$, Proposition \ref{MNZ result} says that $q, g_1, g_2$ have a common factor of degree 1. By our construction, this common factor must also be present in all components of $I$ of degree $\leq m+1$. But $d = m+1 > e/2  = m+1/2$, so by Lemma \ref{MZ Ill} we get a contradiction.

This takes care of the third and fourth Betti tables for $m=3$ and also the case $m \geq 4$, so we are finished with the proof.
\end{proof}

Before proving the WLP for artinian Gorenstein algebras of the third family, we need to study the possible four-dimensional subspaces of quadrics in four variables that can be the degree two part of the ideals.

\begin{theorem}
    \label{thm:quadrics}
Consider an ideal $I \subset R = K[x_1,\ldots,x_4]$ that is minimally generated by four quadrics. If the generic initial ideal of $I$ with respect to the lexicographic order contains $x_1^2,x_1x_2,x_1x_3,x_2^2$, then up to a change of coordinates,  one of the following must be true:
\begin{itemize}
    \item[(i)] $I =(x_1x_3,x_1x_4,x_2x_3,x_2x_4)$;\; 
     \item[(ii)] $I = (x_1^2,x_2^2,x_3^2,x_1x_2+x_1x_3+x_2x_3)$;
          \item[(iii)] $I = (x_1^2,x_2^2,x_3^2,x_1x_3+x_2x_3)$;
     \item[(iv)] $I = (x_1^2,x_1x_2,x_1x_3-x_2^2,x_3^2)$;
     \item[(v)] $I = (x_1^2,x_1x_2,x_1x_3-x_2^2,x_2x_3)$;
     \item[(vi)] $I = (x_1^2,x_1x_3,x_2^2,x_2x_3)$;     
     \item [(vii)] $I = (x_1^2,x_1x_2,x_1x_3,x_2x_3)$;
     \item[(viii)] $I = (x_1^2,x_1x_2,x_2^2,q)$, where  $q=x_3x_4$, $q=x_3^2+x_2x_4$ or $q = x_3^2$.  
     \item[(ix)] $I = (x_1^2,x_1x_2,x_2^2,x_1x_4-x_2x_3)$; \; or
    \item[(x)] $I = (x_1^2,x_1x_2,x_2^2,x_1x_3)$. 
    \end{itemize}
Furthermore, the Hilbert function of $I$ is
\begin{itemize}
    \item $(1,4,6,6,6,\dots)$ in cases $(ii)$, $(iii)$, $(iv)$ and $(viii)$,
    \item $(1,4,6,7,8,9,\dots)$ in cases $(v)$ and $(vi)$, and 
    \item $(1,4,6,8,10,\dots)$ in cases $(i)$, $(vii)$, $(ix)$ and $(x)$. 
\end{itemize}
\end{theorem}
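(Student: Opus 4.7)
My first step is to exploit the gin hypothesis to place $I$ in a canonical form. Since $\dim_K[I]_2=4$ and $\{x_1^2,x_1x_2,x_1x_3,x_2^2\}$ are the four degree-two monomials in the generic initial ideal, after a sufficiently general linear change of coordinates a row-reduction of the $4$-dimensional subspace $[I]_2$ with respect to the lex order yields generators of the form
\[
f_1=x_1^2+\varphi_1,\quad f_2=x_1x_2+\varphi_2,\quad f_3=x_1x_3+\varphi_3,\quad f_4=x_2^2+\varphi_4,
\]
where each $\varphi_i$ is a linear combination of the six remaining monomials $\{x_1x_4,\,x_2x_3,\,x_2x_4,\,x_3^2,\,x_3x_4,\,x_4^2\}$. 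The residual coordinate changes I may still apply are those that preserve the gin, namely the Borel subgroup $B\subset\mathrm{GL}_4$ stabilizing the standard flag; my strategy is to spend this Borel freedom together with rescaling within $[I]_2$ to absorb as many of the $24$ free coefficients in $(\varphi_1,\varphi_2,\varphi_3,\varphi_4)$ as possible.

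\textbf{Trichotomy via Hilbert function.} Next I compute $h_3:=\dim_K[R/I]_3$. Since the $16$ cubics $\{x_k f_j\}_{1\le k,j\le 4}$ span $[I]_3$, we have $h_3=20-\dim_K\langle x_kf_j\rangle$, and by Macaulay's bound applied to $h_2=6$ we obtain $h_3\in\{6,7,8\}$. By Gotzmann's persistence (\Cref{thm:useful}(iii)) together with Macaulay's bound in higher degrees, these three values force the Hilbert function of $R/I$ to be respectively
\[
(1,4,6,6,6,\dots),\quad (1,4,6,7,8,9,\dots),\quad \text{or}\quad (1,4,6,8,10,\dots),
\]
corresponding to Hilbert polynomials $6$, $t+3$, and $2t+2$. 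In particular the saturation $I^{\mathrm{sat}}$ cuts out respectively a zero-dimensional scheme of degree $6$, a line with embedded structure, or a curve of degree $2$ in $\mathbb{P}^3$.

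\textbf{Orbit analysis case-by-case.} Inside each stratum I pin down the orbits by marrying the algebraic normal form with the geometry of $V(I)$. In the Hilbert-polynomial-$2t+2$ stratum, the saturation is a degree-$2$ curve, and the classical list (two skew lines, two concurrent lines, smooth conic, or a double line on a smooth quadric) combined with the allowed embedded structure (controlled by the $\varphi_i$) pins the orbit to one of (i), (vii), (ix), (x). In the Hilbert-polynomial-$(t+3)$ stratum, the saturation is a line and the length-$2$ embedded structure is either ``planar'' or ``spatial'', giving (v) and (vi). In the constant-Hilbert-polynomial-$6$ stratum the zero-dimensional scheme decomposes according to whether the 3-generated subideal $(f_1,f_2,f_4)$ of quadrics in $\{x_1,x_2\}$ reduces modulo $(x_3,x_4)$ to a complete intersection, a generic net, or a degenerate configuration; a finite Borel-orbit analysis of the residual form $\varphi_3$ then yields cases (ii), (iii), (iv), and the three subcases of (viii). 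Throughout, having fixed the leading monomials $x_1^2,x_1x_2,x_1x_3,x_2^2$, I produce an explicit element of the Borel (or of $\mathrm{GL}_4$ preserving the gin) sending each normal form into the specific representative listed.

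\textbf{Main obstacle.} The hardest step will be the constant-Hilbert-polynomial-$6$ stratum: here the geometric picture (six points, possibly all coincident) is coarsest, and the entire classification must be extracted from the algebraic normal form of the $\varphi_i$ under the Borel action. I expect to need a careful structural argument, perhaps via Macaulay inverse systems for the degree-$6$ local algebra at the supported point, to rule out any continuous moduli and to produce the explicit four (respectively three) discrete orbits claimed. A secondary obstacle is ensuring that the numerical Hilbert-function computation in the second step really forces the asymptotic behaviour stated in each case; this will rely on a combined use of Gotzmann persistence and the observation that once $h_d\le 2d$, Green's hyperplane section theorem (\Cref{thm:useful}(ii), via \Cref{lem:hilb is two}) propagates slow growth.
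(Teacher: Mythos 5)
There is a genuine gap, in fact two. First, your ``trichotomy via Hilbert function'' step is not established by the tools you invoke. Macaulay's bound applied to $h_2=6$ gives only $h_3\le\binom{5}{3}=10$; the bound $h_3\le 8$ really comes from the fact that the lex gin already contains the twelve cubic multiples of $x_1^2,x_1x_2,x_1x_3,x_2^2$, and the lower bound $h_3\ge 6$ (equivalently, that the four quadrics admit at least two linear syzygies) is nowhere justified -- in the paper it is a \emph{consequence} of the classification, not an input. Worse, knowing $h_3$ does not pin down the whole Hilbert function: from $h_3=7$ Macaulay only gives $h_4\le 9$, and from $h_3=6$ only $h_4\le 7$, and Gotzmann persistence does not apply because the growth from degree $2$ to degree $3$ is not maximal in these cases. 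So the three Hilbert functions (hence the geometric description of the saturation as a degree-$2$ curve, a line, or a length-$6$ scheme) that your stratification rests on are asserted, not proved.

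Second, and more seriously, the heart of the theorem -- that there are only finitely many $\mathrm{GL}_4$-orbits and that they are exactly the ten listed -- is not actually argued. Your ``orbit analysis case-by-case'' consists of assertions (``a finite Borel-orbit analysis of the residual form $\varphi_3$ then yields cases (ii), (iii), (iv), and the three subcases of (viii)''), and in your ``Main obstacle'' paragraph you concede that the constant-Hilbert-polynomial stratum is unresolved and that you ``expect to need'' a structural argument to rule out continuous moduli. Ruling out moduli in a $24$-parameter family of tails under the Borel action is precisely the hard content, and the brute-force normal-form route gives no mechanism for it. The paper's proof hinges on a different idea that your plan lacks: the gin hypothesis says that for a general point $p\in\mathbb P^3$ there is a (unique up to scalar) quadric $q_H\in[I]_2$ that is a cone with vertex through $p$, giving a rational map $\Phi\colon\mathbb P([R]_1)\dashrightarrow\mathbb P([I]_2)$; the classification then proceeds by the dimension of $\operatorname{im}(\Phi)$ (equivalently the generic rank of $q_H$), using Bertini to kill the rank-$3$-generic case, the geometry of the rank-$\le 2$ locus and of rulings on a quadric surface to get case (i), pencils of quadrics of constant rank $3$ to reduce to quadrics in $K[x_1,x_2,x_3]$ and an inverse-system analysis there, and a separate analysis when the generic $q_H$ has rank $1$. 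Without this (or some equally effective substitute) your proposal is an outline of where the answer should live, not a proof.
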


\begin{proof} 
    By the definition of the lexicographic order, any quadric whose initial monomial with respect to the lexicographic order is $x_2^2$ is in $K[x_2, x_3, x_4]$. Hence, 
    the assumption on the generic initial ideal means that up to scaling there is a unique quadratic form in $I$ that does not involve $x_1$ after a general change of coordinates. This implies that for a general choice of $H=\langle\ell_1,\ell_2,\ell_3\rangle\subseteq [R]_1$,  there is a unique quadratic form $q_H\in H^2\cap [I]_2$ up to scaling.

    This gives a rational map 
    \[
    \begin{array}{cccccccc}
    \Phi\colon  \mathbb P^3 & = & \mathbb P([R]_1) & \longrightarrow &  \mathbb P^3 & = & \mathbb P([I]_2). \\
    && H & \mapsto & q_H
    \end{array}
    \]
    By \Cref{hesscriterium}, the locus of quadrics in $\mathbb P([I]_2)$ of rank less than $4$ is given by a quartic polynomial $f$ which is the determinant of the Hessian matrix of the form $a_1q_1+a_2q_2+a_3q_3+a_4q_4$, where $a_1,a_2,a_3,a_4$ are the homogeneous coordinates for  $\mathbb P([I]_2)$. Since the rank of $q_H$ is at most three, the image of $\Phi$ is contained in $V(f)$.

    We now divide the proof into three cases depending on the dimension of the image of $\Phi$. These cases correspond to the rank of a general $q_H$ in the image of $\Phi$. If  $\dim \operatorname{im}(\Phi) = 3$, the map is dominant and the rank of $q_H$ is generically $3$. If the $\dim \operatorname{im}(\Phi) = 2$ the fibers are one-dimensional which corresponds to the rank of $q_H$ to be generically $2$. Finally, if $\dim\operatorname{im}(\Phi)=1$, we have two-dimensional fibers and the generic rank of $q_H$ is one.
     
    We first assume that $\dim\operatorname{im}(\Phi)=3$ so that the rank of $q_H$ is generically $3$. Then the map $\Phi$ is dominant and one-one since we can recover $H=(\ell_1,\ell_2,\ell_3)$ from  $q_H(\ell_1,\ell_2,\ell_3)$. By Bertini's Theorem \cite{K}, a general element of the linear system $[I]_2$ is smooth away from the base locus. This gives a contradiction since the singular point of $q_H$ varies over an open set of $\mathbb P^3$ as $H$ varies. We conclude that the rank of $q_H$ cannot be generically $3$. 

    We now assume that $\dim\operatorname{im}(\Phi) = 2$ which means that the rank of generic $q_H$ is $2$. Thus the closure of  $\operatorname{im}(\Phi)$ is a surface $V\subseteq \mathbb P([I]_2)$. 

    If $V=\langle q_1,q_2,q_3\rangle$ is a plane,  we have that the general element of the linear system of this plane is smooth away from the base locus. Since the quadrics of rank two  have a singular line, these singular lines must be contained in the base locus. Either the singular lines are all the same, which means that $\langle q_1,q_2,q_3\rangle = \langle x_1^2,x_1x_2,x_2^2\rangle$ after a change of coordinates, or the base locus contains a plane which means that  $q_1,q_2,q_3$ have a common factor $\ell$. However, we cannot have that $q_H$ is divisible by $\ell$ for all $H$, since  $(x_2,x_3,x_4)^2$ contains no forms divisible by $x_1$.

        If $\langle q_1,q_2,q_3\rangle = \langle x_1^2,x_1x_2,x_2^2\rangle$, we have that
    $q_H=\ell^2$ is the square of the unique linear form in $\ell\in H\cap \langle x_1,x_2\rangle$ for general choice of $H$. Hence the generic rank of $q_H$ is one, contradicting the assumption that $\dim\operatorname{im}(\Phi)=2$.

 We now go to the case where $V$ is not contained in a plane. 
    A general line in $\mathbb P([I]_2)$ will be a pencil of quadrics where the general element has rank $3$ or $4$. We start with the case where the general element has rank $4$. Since $V$ is not contained in a plane, there are at least two points of rank $2$ on the line. A pencil of quadrics spanned by two rank $2$ quadrics where the general element has rank $4$ has to be of the form $\langle \ell_1\ell_2,\ell_3\ell_4\rangle$, where $\{\ell_1,\ell_2,\ell_3,\ell_4\}$ is linearly independent. Hence there are exactly two points of intersection between the line and $V$ and we conclude that $V$ is a quadric surface. This surface is either smooth or a cone over a smooth conic. In any case, through two general points on $V$, we can always find lines in $V$ that meet at a third point. The lines through  $\langle \ell_1\ell_2\rangle$ that can be contained in $V$ are lines sharing a common factor $\ell_1$ or $\ell_2$ and lines in the plane $\langle \ell_1^2,\ell_1\ell_2,\ell_2^2\rangle$. The lines in $\langle \ell_1^2,\ell_1\ell_2,\ell_2^2\rangle$ do not meet any of the lines through $\langle\ell_3\ell_4\rangle$ and the lines in $\langle \ell_3^2,\ell_3\ell_4,\ell_4^2\rangle$ do not meet any of the lines through $\langle\ell_1\ell_2\rangle$.
    If $V$ were a cone, the apex would have to be one of the four quadrics $\ell_1\ell_3$, $\ell_1\ell_4$, $\ell_2\ell_3$ or $\ell_2\ell_4$. We get a contradiction by replacing  $\langle\ell_1\ell_2\rangle$ and $\langle\ell_3\ell_4\rangle$ by new general points $\langle\ell_1'\ell_2'\rangle$ and $\langle\ell_3'\ell_4'\rangle$ unless all quadrics on $V$ share a common factor, but this is impossible since $\ell_1\ell_2$ and $\ell_3\ell_4$ have no common factor. Hence we conclude that $V$ is a smooth quadric surface with two rulings. Since all the lines in one ruling meet all the lines in the other ruling,  the only possibilities are 
    \[
     [I]_2 = \langle\ell_1\ell_2, \ell_3\ell_4, \ell_1\ell_3, \ell_2\ell_4\rangle 
    \quad\text{or}\quad
    [I]_2 = \langle\ell_1\ell_2, \ell_3\ell_4, \ell_1\ell_4, \ell_2\ell_3\rangle
    \]
    and after a change of variables this shows that we are in case $(i)$.
    
    Now we look at the case where the general element of $\mathbb P([I]_2)$ has rank $3$. 
    Given one quadric $q_1$ of rank $3$, we can change coordinates so that it becomes $q_1 = x_1^2+x_2^2+x_3^2$. Consider the quadrics $q_2$ such that the general member of the pencil $\langle q_1,q_2\rangle$ has rank $3$. These forms correspond to symmetric matrices such that 
    \[
    p(\lambda)=\det \left(\lambda\begin{bmatrix}
        1&0&0&0\\0&1&0&0\\0&0&1&0\\0&0&0&0
    \end{bmatrix}+\begin{bmatrix}
        a_1&a_2&a_4&a_7\\
        a_2&a_3&a_5&a_8\\
        a_4&a_5&a_6&a_9\\
        a_7&a_8&a_9&a_{10}
    \end{bmatrix}\right) = 0 ,\quad \forall \lambda\in K.
    \]
    Let $Z$ be the variety in $\mathbb A^{10}$ given by the ideal generated by the coefficients of $p(\lambda)$ in $K[a_1,a_2,\dots,a_{10}]$. This is a variety with two components of dimension $6$.
    The coefficients of $\lambda^3$ and $\lambda^2$ in $p(\lambda)$ are $a_{10}$ and $a_7^2+a_8^2+a_9^2$. If $a_7=a_8=a_9=a_{10} = 0$ then $p(\lambda)=0$,  and we have that $q_2$ is a quadric in the subring $K[x_1,x_2,x_3]$. This is one of the components. Otherwise, we have at least one of $a_7$, $a_8$ and $a_9$ non-zero and we have that the other component of $Z$ is fibered over the smooth conic $a_7^2+a_8^2+a_9^2=0$ in the plane with projective coordinates $(a_7:a_8:a_9)$. The linear group fixing this curve acts transitively on the points of the curve and the curve does not contain any line. Hence, if we look for a linear space inside this component of the variety $Z$, we can change coordinates so that $a_9=a_{10}=0$, and $a_8=ia_7$. Then the coefficient for $\lambda$ is $(a_1+2ia_2-a_3)a_7$ and the constant term 
    \[
    a_7^2(a_3a_6-a_5^2+2ia_4a_5-2ia_2a_6+a_4^2-a_1a_6 )=a_6a_7^2(a_1+2ia_2-a_3) + a_7^2(a_4+ia_5)^2.
    \]
    Hence we get the five-dimensional linear space given by the ideal $(a_{10},a_9,a_8-ia_7,a_1+2ia_2-a_3,a_4+ia_5)$. Written as a linear system of quadrics, this is 
    \[
    \langle
    x_1^2+x_2^2,x_1x_2-ix_1^2,x_1x_3+ix_2x_3,x_1x_4+ix_2x_4,x_3^2\rangle.
    \]
    A general member of this space is a quadric of rank $3$. After yet another change of coordinates, we can write this linear system as 
    \[
    W = \langle
    x_1^2,x_1x_2,x_1x_3,x_1x_4,x_2^2
    \rangle
    \]
    and the locus of quadrics of rank $2$ in $W$ is given by the ideal of $3$-minors of 
    \[
    \begin{bmatrix}
        b_1&b_2&b_3&b_4\\
        b_2&b_5&0&0\\
        b_3&0&0&0\\
        b_4&0&0&0
    \end{bmatrix}
    \]
    whose radical is $(b_3b_5,b_4b_5) = (b_3,b_4)\cap(b_5)$. Hence this locus is 
    contained in the union
    \[
    \langle x_1^2,x_1x_2,x_1x_3,x_1x_4\rangle
    \cup 
    \langle
    x_1^2,x_1x_2,x_2^2
    \rangle.
    \]
    We have assumed that $V$ is not contained in a plane and that $V$ is contained in the rank $2$ locus. Hence the only possibility is  that $I = (x_1^2,x_1x_2,x_1x_3,x_1x_4)$. Howver, this contradicts the assumption on the initial ideal. We conclude that the only possibility for the rank of a general element of $\mathbb P([I]_2)$ to be $3$ is that $a_7=a_8=a_9=a_{10}=0$ and $[I]_2\subseteq \langle x_1^2,x_1x_2,x_1x_3,x_2^2,x_2x_3,x_3^2\rangle$ after a change of variables. Consider the ideal $J = S\cap I$, where $S:=K[x_1,x_2,x_3]\subseteq R=K[x_1,x_2,x_3,x_4]$.
    The ideal $J$ has codimension $2$ or $3$. If the codimension is $3$, $[J]_2$ contains a regular sequence  of length $3$ and since we cannot have socle in degree $2$ in this complete intersection, we conclude that the Hilbert function of $J$ is $(1,3,2)$. If the codimension is $2$, the degree can  either be degree $1$ or $2$. Hence there are the following three possible Hilbert functions for $J$ under our assumptions. 
    \begin{enumerate}
        \item $(1,3,2)$
        \item $(1,3,2,1,1,1,\dots)$
        \item $(1,3,2,2,2,2,\dots)$        
    \end{enumerate}
    We deal with these three cases one by one.
    \begin{enumerate}
        \item In this case of Hilbert function $(1,3,2)$, the inverse system of $J$ in degree two is given by a pencil of quadrics where no element has rank $1$. If there is a common factor between the two generators, we have $\langle X_1X_3,X_2X_3\rangle$ after a change of variables and $I = (x_1^2,x_1x_2,x_2^2,x_3^2)$ which is a special case of $(viii)$. If the two generators form a complete intersection, the base locus can  be four points, three points or two points, since one point would force a square of a linear form in the pencil. If the base locus is four points, we have $\langle X_1(X_2-X_3),X_2(X_1-X_3)\rangle$ after a change of coordinates and we get $I = (x_1^2,x_2^2,x_3^2,x_1x_2+x_1x_3+x_2x_3)$ as in case $(ii)$. If the base locus is three points, we have $\langle X_1X_2,X_3(X_1-X_2)\rangle$ after a change of coordinates and we get $I = (x_1^2,x_2^2,x_3^2,x_1x_3+x_2x_3)$ as in case $(iii)$. If the base locus is two points, the multiplicities have to be three and one, since if it was two of multiplicity two, the square of the line through the points would be in the pencil. A pencil of quadrics of rank two either has a common factor or is in two variables which forces a square in the pencil. Hence the general element is smooth and we can change coordinates such that $Q_1 = X_2^2+X_1X_3$ and $Q_2 = X_2X_3$, which gives $I = (x_1^2,x_1x_2,x_1x_3-x_2^2,x_3^2)$ as in case $(iv)$.
        
        \item If the Hilbert function if $(1,3,2,1,1,1,\dots)$, there are two possibilities up to change of coordinates. The inverse system in degree two is a pencil of quadrics. 
        Since the inverse system in degree three and higher is given by powers of a linear form, $L$, we have that $I^\perp_2$ contains a square of a linear form. By a change of variables we can assume that this is $X_3^2$. Hence $[I^\perp]_2 = \langle Q,X_3^2\rangle$. There cannot be socle in degree one, since there are no generators of degree three. Hence $[I^\perp]_2$ generates $[I^\perp]_1$.  
        If the pencil contains a quadric of rank $2$, we can assume that $Q = L_1L_2$ for two linear forms. Since there is no socle in degree one, we have that $\dim_K \langle L_1,L_2,X_3\rangle = 3$ and we can assume that $Q = X_1X_2$. This gives $I = (x_1^2,x_1x_3,x_2^2,x_2x_3)$ as in $(vi)$.

        If the pencil does not have any quadric of rank two, we have that $Q+\lambda X_3^2$ has rank $3$ for all $\lambda\in K$. Let 
        \[
        \begin{bmatrix}
            c_1&c_2&c_4\\
            c_2&c_3&c_5\\
            c_4&c_5&c_6
        \end{bmatrix}
        \]
        be the symmetric matrix representing $Q$. Then this condition on $Q$ means that 
        \[
        \det \begin{bmatrix}
            c_1&c_2\\c_2&c_3
        \end{bmatrix}=0
        \]
        and hence we can assume that $Q = X_2^2 + (\alpha X_1+\beta X_2)X_3$ after a change of basis. Since there is no socle in degree one, we cannot have $\alpha=0$, so we can assume that $Q = X_2^2+X_1X_3$ after a change of coordinates. This gives $I = (x_1^2,x_1x_2,x_1x_3-x_2^2,x_2x_3)$ as in $(v)$.
        \item When the Hilbert function is $(1,3,2,2,2,\dots)$, we have that the inverse system in degree $3$ is either generated by two cubes or by $L_1^3$ and $L_1^2L_2$ for some linear forms. This gives the two possibilities up to change of variables. Either we have $I = (x_1^2,x_1x_2,x_1x_3,x_2x_3)$ as in $(vii)$ or $I = (x_1^2,x_1x_2,x_1x_3,x_2^2)$ as in $(x)$.
    \end{enumerate}

    At last, we assume that $\dim\operatorname{im}(\Phi)=1$ which means that the rank of $q_H$ is one for a general $H$ and the closure of $\operatorname{im}(\Phi)$ is a curve. Look at $W = \langle \ell\colon \ell^2=q_H \text{ for some } H\rangle$. Since $\operatorname{im}(\Phi)$ is a curve, $\dim_K W\ge 2$. If $\dim_K W = 4$, $I$ contains a complete intersection of quadrics, which contradicts the assumption on the initial ideal. If $\dim_K W = 3$, we can assume that $[I]_2 = \langle x_1^2,x_2^2,x_3^2,q\rangle$ for some quadric $q = a_1x_1x_2+a_2x_1x_3+a_3x_1x_4+a_4x_2x_3+a_5x_2x_4+a_6x_3x_4+a_7x_4^2$. With a change of coordinates $(x_1,x_2,x_3,x_4)\mapsto(x_1,x_2+\alpha x_1,x_3+\beta x_1,x_4+\gamma x_1)$ we get that the assumption on the generic initial ideal gives the condition that 
    \[
    4\alpha\beta (a_3+\alpha a_5 + \beta a_6+2\gamma a_7) = 0
    \]
    for general choice of $\alpha,\beta,\gamma$ in $K$. Hence $a_3=a_5=a_6=a_7=0$ showing that 
    $q = a_1x_1x_2+a_2x_1x_3+a_4x_2x_3$ and that we are in the case (1) above.

    Thus we conclude that $\dim_K W = 2$ and $[I]_2 =\langle x_1^2,x_1x_2,x_2^2,q\rangle$ for some quadric $q$ after a change of variables.  We have two cases, either $q\in (x_1,x_2)$ or $q\notin(x_1,x_2)$. If $q\in (x_1,x_2)$ we can assume after a change of coordinates that $q = x_1x_4-x_2x_3$ which is case $(ix)$ or $q = x_1x_3$ which is case $(x)$. If $q\notin (x_1,x_2)$, we can write $q = q_1(x_3,x_4)+(\alpha x_1+\beta x_2)x_3+(\gamma x_1+\delta x_2)x_4$, where $q_1\ne 0$. If $q_1$ has rank $1$, we can assume that $q_1=x_3^2$ and we can write $q = (x_3+\alpha/2x_1+\beta/2x_2)^2 + (\gamma x_1+\delta x_2)x_4 -(\alpha x_1+\beta x_2)^2/4$ which is $q = x_3^2$ or $q = x_3^2+x_2x_4$ after a change of variables depending on whether $\gamma=\delta=0$ or not. If $q_1$ has rank $2$, we can assume that $q_1 = x_3x_4$ and we can write $q = (x_3+\gamma x_1+\delta x_2)(x_4+\alpha x_1+\beta x_2) - (\gamma x_1+\delta x_2)(\alpha x_1+\beta x_2)^2$, so we can assume that $q = x_3x_4$ after a change of coordinates. Together, this is case $(viii)$.
\end{proof}

We are now ready to prove that the WLP holds in the third of our families using Proposition~\ref{exactsequence} and Theorem~\ref{136}.
    
\begin{theorem} \label{keyresult}
    Any artinian Gorenstein algebra $A$ of odd socle degree odd, $d=2m+1$, and $h$-vector 
\begin{equation}\label{1468}
    (1, 4, 6, 8, 10, 2m, \cdots, 2m+2, 2m+2, 2m, \cdots ,10, 8, 6, 4, 1)
\end{equation}
satisfies the WLP. 
\end{theorem}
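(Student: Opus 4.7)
The goal is to show that the multiplication map $\times\ell\colon[A]_m\to[A]_{m+1}$ is bijective for a general linear form $\ell\in[A]_1$. Both spaces have dimension $2m+2$, and by the symmetry of the Hilbert function \eqref{1468} together with Gorenstein duality, this is the only nontrivial Lefschetz condition to verify. Since $h_1=4$, we may take $R=K[x_1,x_2,x_3,x_4]$, and then $\dim_K[I]_2=4$. We proceed by induction on $m$, treating the small cases $(1,4,4,1)$ and $(1,4,6,6,4,1)$ as base cases via \cite{kustin} and via Corollary~\ref{wlp slp} combined with the Hessian criterion of Theorem~\ref{Hess}.

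For the inductive step we split into two cases. In Case~1 (the generic case) we construct, from three general linear forms in $[A]_1$, an auxiliary artinian Gorenstein algebra $\widetilde A$ of codimension three whose $h$-vector matches the second family
\[
(1,3,6,8,\dots,2m,2m+2,2m+2,2m,\dots,6,3,1)
\]
of Theorem~\ref{136}. Theorem~\ref{136} then gives the WLP of $\widetilde A$, and an application of the Snake Lemma (Lemma~\ref{snake}) to the exact sequence of Notation~\ref{ABC} transfers this conclusion back to $A$, yielding bijectivity in the critical degree.

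In Case~2 the reduction of Case~1 fails. We show this forces the generic initial ideal of $[I]_2$ with respect to the lexicographic order to equal $(x_1^2,x_1x_2,x_1x_3,x_2^2)$: the only other Borel-fixed set of four quadratic monomials in four variables is $(x_1^2,x_1x_2,x_1x_3,x_1x_4)$, and that option would make $x_1\cdot[A]_1=0$, placing $x_1$ in the socle in degree one, which contradicts the Gorenstein condition. Hence Theorem~\ref{thm:quadrics} applies, and after a change of coordinates $J:=([I]_2)$ lies in one of its ten orbits. Since $R/I$ is a quotient of $R/J$ and $h_3(A)=8$, orbits (ii)--(vi) and (viii) are eliminated by their Hilbert functions, leaving only orbits (i), (vii), (ix) and (x), each of which satisfies $h_3(R/J)=8$.

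For each of these four surviving orbits we describe the admissible Macaulay dual generators $F\in[J^\perp]_{2m+1}$ via apolarity and verify bijectivity of $\times\ell$ directly. Orbit (i) with $J=(x_1,x_2)(x_3,x_4)$ is the cleanest case: apolarity forces $F=G(X_1,X_2)+H(X_3,X_4)$ for binary forms of degree $2m+1$, so $A$ is the fibre product over $K$ of two binary Gorenstein algebras of socle degree $2m+1$, $\times\ell$ splits as a direct sum of multiplications on the two pieces, and each piece is bijective by the SLP for binary forms. The main obstacle lies in orbits (vii), (ix) and (x): there $F$ does not split cleanly, and one must combine the apolarity constraints with the inductive hypothesis applied to the ancillary Gorenstein algebra $B=A/(0:\ell)$, whose $h$-vector again falls into one of the three families of this section, closing the induction by a further appeal to the Snake Lemma.
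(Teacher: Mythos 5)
Your skeleton matches the paper's: split on the lexicographic generic initial ideal of $[I]_2$, reduce the generic case to the family of Theorem~\ref{136}, and invoke Theorem~\ref{thm:quadrics} in the special case. But several steps carrying the real weight are wrong or missing. First, your dichotomy argument is incorrect: if the gin of $[I]_2$ is $(x_1^2,x_1x_2,x_1x_3,x_1x_4)$ this does \emph{not} force $x_1\cdot[A]_1=0$ --- the gin only records leading terms of four quadrics, not membership of $x_1[R]_1$ in $I$ --- and in fact this gin is precisely the generic situation (the paper's first case), perfectly compatible with Gorensteinness; declaring it impossible makes your own Case~1 incoherent. Relatedly, the transfer in Case~1 cannot be a Snake Lemma argument on $0\to B(-1)\to A\to C\to 0$: the subalgebra generated by three general linear forms is not a quotient of $A$ fitting that sequence. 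The paper instead observes that the middle catalecticant/Hessian pairing of $A$ coincides with that of a codimension-three Gorenstein algebra with $h$-vector $(1,3,6,8,\dots,2m+2,2m+2,\dots,8,6,3,1)$ and then applies Theorem~\ref{136}; you would need to supply that identification (or a substitute), not cite Lemma~\ref{snake}.

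Second, your elimination of orbits (ii)--(vi) and (viii) via ``$h_3(A)=8$'' presumes $m\ge 3$; for $m=2$ the $h$-vector is $(1,4,6,6,4,1)$ with $h_3=6$, and those orbits genuinely occur and must be treated (the paper handles (ii), (iii), (iv), (viii) by a zero-dimensional scheme argument and (v), (vi) by explicit Hessian computations). Your proposed base case for $(1,4,6,6,4,1)$ is circular: Corollary~\ref{wlp slp} only says WLP is equivalent to SLP for that shape, and Theorem~\ref{Hess} is a criterion, not a proof that the relevant Hessian is nonzero; in the paper this case (Corollary~\ref{main}) is \emph{deduced from} the present theorem, not the other way around. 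Finally, for the surviving orbits (vii), (ix), (x) you give no actual argument: the paper's proof there consists of explicit catalecticant and $m$-th Hessian computations (e.g.\ the factorization $(a_1x_4)^2\det\mathbf A\,\det\mathbf B$ in case (vii)) showing the Hessian determinant cannot vanish identically because the catalecticant has full rank. Your appeal to the induction hypothesis for $B=A/(0:\ell)$ cannot replace this: $B$ has even socle degree $2m$, so its $h$-vector lies in none of the three odd-socle-degree families, and no computation is offered. Only your treatment of orbit (i) (splitting of the middle multiplication for the connected sum, plus the SLP in two variables) is essentially complete and agrees with the paper.
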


\begin{proof}
It is enough to consider the multiplication $\times \ell\colon [A]_m\longrightarrow [A]_{m+1}$.
    Let $A = R/I$ be an artinian Gorenstein algebra with $h$-vector (\ref{1468}) where $R = K[x_1,x_2,x_3,x_4]$.
    
    Then the quadratic part of the ideal,  $[I]_2$, is four-dimensional. There are two possibilities for the generic initial monomials with respect to the \emph{lexicographic} order:
    \[
    \{x_1^2,x_1x_2,x_1x_3,x_1x_4\} \quad\text{and}\quad
    \{x_1^2,x_1x_2,x_1x_3,x_2^2\}
    \]
    since these are the only stable sets of four monomials under the Borel action.
    We will divide the proof of the theorem according to these two cases. Observe that we use the lexicographic order rather than the reverse lexicographic order. This means that the first case is the generic while the other case is special.
    
    In the first case, after a general change of coordinates, the leading monomials of the forms in $[I]_2$  are $\{x_1^2,x_1x_2,x_1x_3,x_1x_4\}$. This implies that the subalgebra $B$ generated by $x_2,x_3,x_4$ in $A$ that shares the Hilbert function with $A$ except for in degree $1$. In this case the pairing in the middle and the corresponding Hessian will be the same as in a Gorenstein artin algebra in three variables with $h$-vector 
    \[
    (1,3,6,8,10,\dots,2m+2,2m+2,\dots,10,8,6,3,1).
    \] 
    and we know from Proposition~\ref{136} that this has the WLP. 
    
    In the second case, we have that after a general change of coordinates $[I]_2$ has initial monomials $\{ x_1^2,x_1x_2,x_1x_3,x_2^2\}$. We can now apply Theorem~\ref{thm:quadrics} and we will consider the ten  cases
    \begin{itemize}
    \item[$(i)$] $[I]_2 =\langle x_1x_3,x_1x_4,x_2x_3,x_2x_4\rangle$;\; 
     \item[$(ii)$] $[I]_2 = \langle x_1^2,x_2^2,x_3^2,x_1x_2+x_1x_3+x_2x_3\rangle$;
     \item[$(iii)$] $[I]_2 = \langle x_1^2,x_2^2,x_3^2,x_1x_3+x_2x_3\rangle$;
     \item[$(iv)$] $[I]_2 = \langle x_1^2,x_1x_2,x_1x_3-x_2^2,x_3^2\rangle$;
     \item[$(v)$] $[I]_2 = \langle x_1^2,x_1x_2,x_1x_3-x_2^2,x_2x_3\rangle$;
     \item[$(vi)$] $[I]_2 = \langle x_1^2,x_1x_3,x_2^2,x_2x_3\rangle$;
     \item [$(vii)$] $[I]_2 = \langle x_1^2,x_1x_2,x_1x_3,x_2x_3\rangle$;
     \item[$(viii)$] $[I]_2 = \langle x_1^2,x_1x_2,x_2^2,q\rangle$, where $q\notin (x_1,x_2)$.       
     \item[$(viii)$] $[I]_2 = \langle x_1^2,x_1x_2,x_2^2,x_1x_4-x_2x_3\rangle$; \; or
    \item[$(x)$] $[I]_2 = \langle x_1^2,x_1x_2,x_2^2,x_1x_3\rangle$. 
    \end{itemize}

\noindent $(i)$
    When  $[I]_2 = \langle x_1x_3,x_1x_4,x_2x_3,x_2x_4\rangle$ we have that the Gorenstein algebra $A = R/I$ is the connected sum of one Gorenstein algebra in $k[x_1,x_2]$ and one in $k[x_3,x_4]$. Hence it satisfies the SLP.
    
    \noindent $(ii)$, $(iii)$, $(iv)$ and $(viii)$
    When $[I]_2 = \langle x_1^2,x_2^2,x_3^2,x_1x_2+x_1x_3+x_2x_3\rangle$, $[I]_2 = \langle x_1^2,x_2^2,x_3^2,x_1x_3+x_2x_3\rangle$ or $[I]_2 = \langle x_1^2,x_1x_2,x_2^2,q\rangle$, where $q\notin (x_1,x_2)$, the Hilbert function of $A = R/I$ is bounded from above by the Hilbert function of $R/([I]_2)$ which is $(1,4,6,6,6,\dots)$.  Hence these cases can only occur for $m=2$. Here $([I]_2)$ is the ideal of a zero-dimensional scheme in $\mathbb P^3$ and a non-zero-divisor $\ell$ gives injective multiplication on $A = R/I$ up to degree $3$. We conclude that $A$ has the WLP.
    
\noindent $(v)$ 
    When $[I]_2 = \langle x_1^2,x_1x_2,x_1x_3-x_2^2,x_2x_3\rangle$ the Hilbert function of $A=R/I$ is bounded by $(1,4,6,7,8,9,\dots)$ and we are again forced to have $m=2$. A general element of the inverse system of $([I]_2)$ in degree five can be written as 
       \[
       \begin{split}
       F =  
       \frac{a_1}{2!3!}X_{2}^{2}X_{4}^{3}
       +\frac{a_1}{3!}X_{1}X_{3}X_{4}^{3}       
       +\frac{a_2}{4!}X_{1}X_{4}^{4}
       +\frac{a_3}{4!}X_{2}X_{4}^{4}
       +\frac{a_4}{5!}X_{3}^{5}
       \\+\frac{a_5}{4!}X_{3}^{4}X_{4}
       +\frac{a_6}{2!3!}X_{3}^{3}X_{4}^{2}
        +\frac{a_7}{2!3!}X_{3}^{2}X_{4}^{3}
       +\frac{a_8}{4!}X_{3}X_{4}^{4}
       +\frac{a_9}{5!}X_{4}^{5}
      \end{split}
       \]
       and with the monomial bases for $[R/([I]_2)]_2$ and $[R/([I]_2)]_3$ given by 
       \[
       \left\{{x}_{1}{x}_{3},\,{x}_{1}{x}_{4},\,{x}_{2}{x}_{4},\,{x}_{3}^{2},\,{x}_{3}{x}_{4},\,{x
      }_{4}^{2}\right\}
       \]
       and 
       \[
       \left\{{x}_{1}{x}_{3}{x}_{4},\,{x}_{1}{x}_{4}^{2},\,{x}_{2}{x}_{4}^{2},\,{x}_{3}^{3},\,{x
      }_{3}^{2}{x}_{4},\,{x}_{3}{x}_{4}^{2},\,{x}_{4}^{3}\right\}
       \]
       we get the catalecticant matrix
       \[
       \begin{bmatrix}
      0&0&0&0&0&0&{a}_{1}\\
      0&0&0&0&0&{a}_{1}&{a}_{2}\\
      0&0&{a}_{1}&0&0&0&{a}_{3}\\
      0&0&0&{a}_{4}&{a}_{5}&{a}_{6}&{a}_{7}\\
      0&{a}_{1}&0&{a}_{5}&{a}_{6}&{a}_{7}&{a}_{8}\\
      {a}_{1}&{a}_{2}&{a}_{3}&{a}_{6}&{a}_{7}&{a}_{8}&{a}_{9}\end{bmatrix}
       \]
       and the second Hessian matrix restricted to $x_3$ and $x_4$ is given by 
              \[
        \begin{bmatrix}
      0&0&0&0&0&{a}_{1}{x}_{4}\\
      0&0&0&0&{a}_{1}{x}_{4}&{a}_{1}{x}_{3}+{a}_{2}{x}_{4}\\
      0&0&{a}_{1}{x}_{4}&0&0&{a}_{3}{x}_{4}\\
      0&0&0&{a}_{4}{x}_{3}+{a}_{5}{x}_{4}&{a}_{5}{x}_{3}+{a}_{6}{x}_{4}&{a}_{6}{x}_{3}+{a}_{7}{x}_{4}\\
      0&{a}_{1}{x}_{4}&0&{a}_{5}{x}_{3}+{a}_{6}{x}_{4}&{a}_{6}{x}_{3}+{a}_{7}{x}_{4}&{a}_{7}{x}_{3}+{a}_{8}{x}_{4}\\
      {a}_{1}{x}_{4}&{a}_{1}{x}_{3}+{a}_{2}{x}_{4}&{a}_{3}{x}_{4}&{a}_{6}{x}_{3}+{a}_{7}{x}_{4}&{a}_{7}{x}_{3}+{a}_{8}{x}_{4}&{a}_{8}{x}_{3}+{a}_{9}{x}_{4}\end{bmatrix}
       \]
       The determinant of this matrix is 
       \[
       (a_1x_4)^5(a_4x_3+a_5x_4).
       \]
       Since the catalecticant matrix has rank $6$, we have that $a_1\neq 0$ and that the matrix 
       \[
       \begin{bmatrix}
           a_4&a_5\\a_5&a_6
       \end{bmatrix}
       \]
       has rank $2$. Hence the determinant of the second Hessian cannot vanish completely and $A$ satisfies the WLP.
       
     \noindent $(vi)$ When $[I]_2 =\langle x_1^2,x_1x_3,x_2^2,x_2x_3\rangle$ the Hilbert function of $A = R/I$ is bounded by $(1,4,6,7,8,9,\dots)$ and we are forced to have $m=2$.  We can write the general element of the inverse system of $([I]_2)$ in degree five as 
     \[\begin{split}
     F =  
     \frac{a_1}{3!}X_{1}X_{2}X_{4}^{3}
     +\frac{a_2}{4!}X_{1}X_{4}^{4}
     +\frac{a_3}{4!}X_{2}X_{4}^{4}
     +\frac{a_4}{5!}X_{3}^{5}
     +\frac{a_5}{4!}X_{3}^{4}X_{4}
     \\+\frac{a_6}{2!3!}X_{3}^{3}X_{4}^{2}
     +\frac{a_7}{2!3!}X_{3}^{2}X_{4}^{3}
     +\frac{a_8}{4!}X_{3}X_{4}^{4}
     +\frac{a_9}{5!}X_{4}^{5}         
     \end{split}
     \]
     When using 
     \[
     \left\{{x}_{1}{x}_{2},\,{x}_{1}{x}_{4},\,{x}_{2}{x}_{4},\,{x}_{3}^{2},\,{x}_{3}{x}_{4},\,{x}_{4}^{2}\right\}
     \,\text{and}\,
     \left\{{x}_{1}{x}_{2}{x}_{4},\,{x}_{1}{x}_{4}^{2},\,{x}_{2}{x}_{4}^{2},\,{x}_{3}^{3},\,{x}_{3}^{2}{x}_{4},\,{x}_{3}{x}_{4}^{2},\,{x}_{4}^{3}\right\}
     \]
     as monomial bases for $R/([I]_2)$ in degree $2$ and $3$, we get the catalecticant matrix
     \[
      \begin{bmatrix}
      0&0&0&0&0&0&{a}_{1}\\
      0&0&{a}_{1}&0&0&0&{a}_{2}\\
      0&{a}_{1}&0&0&0&0&{a}_{3}\\
      0&0&0&{a}_{4}&{a}_{5}&{a}_{6}&{a}_{7}\\
      0&0&0&{a}_{5}&{a}_{6}&{a}_{7}&{a}_{8}\\
      {a}_{1}&{a}_{2}&{a}_{3}&{a}_{6}&{a}_{7}&{a}_{8}&{a}_{9}\end{bmatrix} .
     \]
     Since this has rank $6$, we have that $a_1\neq 0$ and that the matrix
     \[
            \begin{bmatrix}
           a_4&a_5&a_6\\a_5&a_6&a_7\\
       \end{bmatrix}
     \]
     has rank $2$. 
     The restriction of the second Hessian matrix to $x_3$ and $x_4$ is given by 
     \[
     \begin{bmatrix}
          0&0&0&0&0&{a}_{1}{x}_{4}\\
      0&0&{a}_{1}{x}_{4}&0&0&{a}_{2}{x}_{4}\\
      0&{a}_{1}{x}_{4}&0&0&0&{a}_{3}{x}_{4}\\
      0&0&0&{a}_{4}{x}_{3}+{a}_{5}{x}_{4}&{a}_{5}{x}_{3}+{a}_{6}{x}_{4}&{a}_{6}{x}_{3}+{a}_{7}{x}_{4}\\
      0&0&0&{a}_{5}{x}_{3}+{a}_{6}{x}_{4}&{a}_{6}{x}_{3}+{a}_{7}{x}_{4}&{a}_{7}{x}_{3}+{a}_{8}{x}_{4}\\
      {a}_{1}{x}_{4}&{a}_{2}{x}_{4}&{a}_{3}{x}_{4}&{a}_{6}{x}_{3}+{a}_{7}{x}_{4}&{a}_{7}{x}_{3}+{a}_{8}{x}_{4}&{a}_{8}{x}_{3}+{a}_{9}{x}_{4}
     \end{bmatrix}
     \]
     which has determinant 
     \[
\left({{a}_{1}}{{x}_{4}}\right)^{4}\left(\left({a}_{4}{a}_{6}-{a}_{5}^{2}\right){x}_{3}^{2}+\left({a}_{4}{a}_{7}-{a}_{5}{a}_{6}\right){x}_{3}{x}_{4}+\left({a}_{5}{a}_{7}-{a}_{6}^{2}\right){x}_{4}^{2}\right).
     \]
     Hence the condition on the catalecticant forces the determinant of the second Hessian matrix to be non-vanishing and we conclude that $A=R/I$ satisfies the WLP.
     
     \noindent $(vii)$ When $[I]_2 = \langle x_1^2,x_1x_2,x_1
     x_3,x_2x_3\rangle$ we have that the inverse system of $([I]_2)$ in degree $d$ is given by 
     \[
     \left\{
     \frac{X_1X_4^{d-1}}{(d-1)!},
     \frac{X_2^d}{d!},\frac{X_2^{d-1}X_4}{(d-1)!},\dots,\frac{X_2X_4^{d-1}}{(d-1)!},
     \frac{X_3^d}{d!},\frac{X_3^{d-1}X_4}{(d-1)!},\dots,\frac{X_3X_4^{d-1}}{(d-1)!},\frac{X_4^d}{d!}    
     \right\}
     \]
     and we can use the corresponding monomial bases without factorials for $[R/([I]_2)]_m$ and $[R/([I]_2)]_{m+1}$ to get the catalecticant matrix
     \[
     \begin{bmatrix}
         0&0&0&\cdots&0&0&0&\cdots&0&a_1\\
         0&a_2&a_3&\cdots&a_{m+2}&0&0&\cdots&0&a_{m+3}\\
         0&a_3&a_4&\cdots&a_{m+3}&0&0&\cdots&0&a_{m+4}\\
         \vdots&\vdots&\vdots&\ddots&\vdots&\vdots&\vdots&\ddots&\vdots&\vdots\\
         0&a_{m+1}&a_{m+2}&\cdots&a_{2m+1}&0&0&\cdots&0&a_{2m+2}\\     0&0&0&\cdots&0&a_{2m+3}&a_{2m+4}&\cdots&a_{3m+3}&a_{3m+4}\\
0&0&0&\cdots&0&a_{2m+4}&a_{2m+5}&\cdots&a_{3m+4}&a_{3m+5}\\
 \vdots&\vdots&\vdots&\ddots&\vdots&\vdots&\vdots&\ddots&\vdots&\vdots\\
0&0&0&\cdots&0&a_{3m+2}&a_{3m+3}&\cdots&a_{4m+2}&a_{4m+3}\\
a_1&a_{m+2}&a_{m+3}&\cdots&a_{2m+2}&a_{3m+3}&a_{3m+4}&\cdots&a_{4m+3}&a_{4m+4}\\
     \end{bmatrix}
     \]
     Hence $a_1\ne 0$ and both matrices 
     \[
     \begin{bmatrix}
         a_2&a_3&\cdots&a_{m+2}\\
         a_3&a_3&\cdots&a_{m+2}\\
\vdots&\vdots&\ddots&\vdots\\         a_{m+1}&a_{m+2}&\cdots&a_{2m+1}\\
     \end{bmatrix}\text{ and }
          \begin{bmatrix}
         a_{2m+3}&a_{2m+4}&\cdots&a_{3m+3}\\
         a_{2m+4}&a_{2m+5}&\cdots&a_{3m+4}\\
\vdots&\vdots&\ddots&\vdots\\         a_{3m+2}&a_{3m+3}&\cdots&a_{4m+2}\\
     \end{bmatrix}
     \]
     have maximal rank. The $m$th Hessian is of the block-form
     \[
     \begin{bmatrix}
         0&0&0&a_1x_4\\
         0&\mathbf A&0&\mathbf x\\
         0&0&\mathbf B&\mathbf y\\
         a_1x_4&\mathbf x^t&\mathbf y^t&\ell_1\\
     \end{bmatrix}
     \]
     where
     \[
     \mathbf A = x_2 \begin{bmatrix}
         a_2&a_3&\cdots&a_{m+1}\\
         a_3&a_3&\cdots&a_{m}\\
\vdots&\vdots&\ddots&\vdots\\         a_{m+1}&a_{m+2}&\cdots&a_{2m}\\
     \end{bmatrix} + 
     x_4 \begin{bmatrix}
         a_3&a_4&\cdots&a_{m+2}\\
         a_4&a_5&\cdots&a_{m+2}\\
\vdots&\vdots&\ddots&\vdots\\         a_{m+2}&a_{m+2}&\cdots&a_{2m+1}\\
     \end{bmatrix}
     \]
     and \[
     \mathbf B = x_3 \begin{bmatrix}
         a_{2m+3}&a_{2m+4}&\cdots&a_{3m+2}\\
         a_{2m+4}&a_{2m+5}&\cdots&a_{3m+3}\\
\vdots&\vdots&\ddots&\vdots\\         a_{3m+2}&a_{3m+3}&\cdots&a_{4m+1}\\
     \end{bmatrix} + 
     x_4 \begin{bmatrix}
         a_{2m+4}&a_{2m+5}&\cdots&a_{3m+3}\\
         a_{2m+5}&a_{2m+6}&\cdots&a_{3m+4}\\
\vdots&\vdots&\ddots&\vdots\\         a_{3m+3}&a_{3m+4}&\cdots&a_{4m+2}\\
     \end{bmatrix}
     \]
     Hence the determinant of the $m$th Hessian is 
     \[
     (a_1x_4)^2\det\mathbf A\det \mathbf B
     \]
     which cannot vanish since the catalecticant matrix has maximal rank.
     \item[$(ix)$] When $[I]_2 = \langle x_1^2,x_1x_2,x_2^2,x_1x_4-x_2x_3\rangle$
     , the inverse system in degree $2m+1$ has a basis
given by    \[
    \begin{split}
    \left\{
    \frac{X_1X_3^{2m}}{(2m)!},\frac{X_1X_3^{2m-1}X_4}{(2m-1)!}+\frac{X_2X_3^{2m}}{(2m)!}, \dots,\frac{X_1X_4^{2m}}{(2m)!}+\frac{X_2X_3X_4^{2m-1}}{(2m-1)!}, \frac{X_2X_4^{2m}}{(2m)!},\right.\\ \left. \frac{X_3^{2m+1}}{(2m+1)!}, \frac{X_3^{2m}X_4}{(2m)!}, \dots, \frac{X_4^{2m+1}}{(2m+1)!}
    \right\}
    \end{split}
    \]
    and we have a monomial basis for $R/([I]_2)$ in degree $d$ given by 
    \[
    \{
    x_1x_3^d,x_1x_3^{d-1}x_4,\dots,x_1x_4^{d-1},x_2x_4^{d-1},x_3^{d},x_3^{d-1}x_4,\dots,x_4^d
    \}.
    \]
    Using these bases we get a catalecticant matrix 
    \[
    \begin{bmatrix}
        0&0&0&\cdots&0&a_1&a_2&a_3&\cdots&a_{m+2}\\
        0&0&0&\cdots&0&a_2&a_3&a_4&\cdots&a_{m+3}\\
        0&0&0&\cdots&0&a_3&a_4&a_5&\cdots&a_{m+4}\\
\vdots&\vdots&\vdots&\ddots&\vdots&\vdots&\vdots&\vdots&\ddots&\vdots\\
        0&0&0&\cdots&0&a_{m+1}&a_{m+2}&a_{m+3}&\cdots&a_{2m+2}\\
        a_1&a_2&a_3&\cdots&a_{m+2}&a_{2m+3}&a_{2m+4}&a_{2m+5}&\cdots&a_{3m+3}\\
        a_2&a_3&a_4&\cdots&a_{m+3}&a_{2m+4}&a_{2m+5}&a_{2m+6}&\cdots&a_{3m+4}\\
        a_3&a_4&a_5&\cdots&a_{m+4}&a_{2m+5}&a_{2m+6}&a_{2m+7}&\cdots&a_{3m+5}\\
\vdots&\vdots&\vdots&\ddots&\vdots&\vdots&\vdots&\vdots&\ddots&\vdots\\
        a_{m+1}&a_{m+2}&a_{m+3}&\cdots&a_{2m+2}&a_{3m+4}&a_{3m+5}&a_{3m+6}&\cdots&a_{4m+4}\\
    \end{bmatrix}
    \]
        The $m$th Hessian matrix is given by a block matrix of the form
    \[
    \begin{bmatrix}0&A\\A^t&B\end{bmatrix}
    \]
    where 
    \[ A = 
    x_3\begin{bmatrix}
        a_1&a_2&\cdots&a_{m+1}\\
        a_2&a_3&\cdots&a_{m+2}\\
         \vdots&\vdots&\ddots&\vdots\\
        a_{m+1}&a_{m+2}&\cdots&a_{2m+1}
    \end{bmatrix}
+    x_4\begin{bmatrix}
        a_2&a_3&\cdots&a_{m+2}\\
        a_3&a_4&\cdots&a_{m+3}\\
         \vdots&\vdots&\ddots&\vdots\\
        a_{m+2}&a_{m+3}&\cdots&a_{2m+2}
    \end{bmatrix}
    \]
    and the coefficients of $x_3^{m+1},x_3^{m}x_4,\dots,x_4^{m+1}$ in 
    $\det A$ are given by the maximal minors of 
    \[
    \begin{bmatrix}
    a_1&a_2&a_3&\cdots&a_{m+2}\\
    a_2&a_3&a_4&\cdots&a_{m+3}\\
    \vdots&\vdots&\vdots&\ddots&\vdots\\
    a_{m+1}&a_{m+2}&a_{m+3}&\cdots&a_{2m+2}\\
    \end{bmatrix}
    \]
    Hence the determinant of the Hessian matrix cannot vanish completely since the catalecticant matrix has full rank. This proves that WLP holds in this case.
    
    \noindent $(x)$  When $[I]_2 = \langle x_1^2,x_1x_2,x_2^2,x_1x_3\rangle$, we have that  that the inverse system of $([I]_2)$ in degree $d\ge 3$ has a monomial basis given by 
    \[
    \left\{
    \frac{X_1X_{4}^{d-1}}{(d-1)!},
    \frac{X_2X_3^{d-1}}{(d-1)!},
    \frac{X_2X_3^{d-2}X_4}{(d-2)!},\dots,
    \frac{X_2X_4^{d-1}}{(d-1)!},
    \frac{X_3^{d}}{d!}, \frac{X_3^{d-1}X_4}{(d-1)!}
    \frac{X_3^{d-2}X_4^2}{2!(d-2)!}, \dots, \frac{X_4^{d}}{d!}
    \right\}.
    \]
    We can use the corresponding monomial basis for the quotient $R/([I]_2)$ and we get the catalecticant matrix 
    \[
    \begin{bmatrix}
         0&0&0&\cdots&0&0&0&\cdots&0&{a}_{1}\\
      0&0&0&\cdots&0&{a}_{2}&{a}_{3}&\cdots&a_{m+2}&{a}_{m+3}\\
      0&0&0&\cdots&0&{a}_{3}&{a}_{4}&\cdots&{a}_{m+3}&{a}_{m+4}\\
\vdots&\vdots&\vdots&\ddots&\vdots&\vdots&\vdots&\ddots&\vdots&\vdots\\
0&0&0&\cdots&0&a_{m+1}&a_{m+2}&\cdots&a_{2m+1}&a_{2m+2}\\
      0&{a}_{2}&{a}_{3}&\cdots&{a}_{m+2}&{a}_{2m+3}&{a}_{2m+4}&\cdots&{a}_{3m+3}&{a}_{3m+4}\\
      0&{a}_{3}&{a}_{4}&\cdots&a_{m+3}&{a}_{2m+4}&{a}_{2m+5}&\cdots&{a}_{3m+1}&{a}_{3m+2}\\
\vdots&\vdots&\vdots&\ddots&\vdots&\vdots&\vdots&\ddots&\vdots&\vdots\\
0&a_{m+1}&a_{m+2}&\cdots&a_{2m+1}&a_{3m+2}&a_{3m+3}&\cdots&a_{4m+2}&a_{4m+3}\\      
      {a}_{1}&{a}_{m+2}&{a}_{m+3}&\cdots&{a}_{2m+2}&{a}_{3m+3}&{a}_{3m+4}&\cdots&{a}_{4m+3}&{a}_{4m+4}
    \end{bmatrix}  
    \]
    and we conclude that $a_1\ne 0$ and that
    \[
    \begin{bmatrix}
        a_2&a_3&\cdots&a_{m+2}\\
        a_3&a_4&\cdots&a_{m+3}\\
        \vdots&\vdots&\ddots&\vdots\\
        a_{m+1}&a_{m+2}&\cdots&a_{2m+1}\\
    \end{bmatrix}
    \]
    has maximal rank. 
    The $m$th Hessian matrix has the form 
    \[
    \begin{bmatrix}
        0 & \mathbf A \\ \mathbf A^t&\mathbf B
    \end{bmatrix}
    \]
    with 
    {\small
    \[
    \mathbf A = x_3\begin{bmatrix} 
    0&a_2&a_3&\cdots&a_{m+1}\\
    0&a_3&a_4&\cdots&a_{m+2}\\
    \vdots&\vdots&\vdots&\ddots&\vdots\\
    0&a_{m+1}&a_{m+2}&\cdots&a_{2m}\\
    0&a_{m+2}&a_{m+3}&\cdots&a_{2m+1}
    \end{bmatrix} + x_4\begin{bmatrix}
    0&a_3&a_4&\cdots&a_{m+2}\\
    0&a_4&a_5&\cdots&a_{m+3}\\
    \vdots&\vdots&\vdots&\ddots&\vdots\\
    0&a_{m+2}&a_{m+3}&\cdots&a_{2m+1}\\
    a_1&a_{m+3}&a_{m+4}&\cdots&a_{2m+2}
    \end{bmatrix}
    \] } 
    and 
    \[
    \det \mathbf A = (-1)^{m}a_1x_4 \det \mathbf A'
    \]
    where 
    \[
    \mathbf A' =x_3\begin{bmatrix}
    a_2&a_3&\cdots&a_{m+1}\\
    a_3&a_4&\cdots&a_{m+2}\\
    \vdots&\vdots&\ddots&\vdots\\
    a_{m+1}&a_{m+2}&\cdots&a_{2m}\\
    \end{bmatrix} + x_4\begin{bmatrix}
    a_3&a_4&\cdots&a_{m+2}\\
    a_4&a_5&\cdots&a_{m+3}\\
    \vdots&\vdots&\ddots&\vdots\\
    a_{m+2}&a_{m+3}&\cdots&a_{2m+1}\\
    \end{bmatrix}.
    \]
    The  coefficients of $x_3^m,x_3^{m-1}x_4,\dots,x_4^m$ in $\det \mathbf A'$ are the $m\times m$-minors of the matrix
    \[
     \begin{bmatrix}
    a_2&a_3&a_4&\cdots&a_{m+2}\\
    a_2&a_3&a_4&\cdots&a_{m+2}\\
    \vdots&\vdots&\vdots&\ddots&\vdots\\
    a_{m+1}&a_{m+2}&a_{m+3}&\cdots&a_{2m+1}\\
    \end{bmatrix}
    \]
    which we know has maximal rank. This shows that the $m$th Hessian doesn't vanish completely and we conclude that WLP holds also in this case.
\end{proof}

\begin{corollary}
    \label{main}
     Any artinian Gorenstein algebra  with $h$-vector $(1, 4, 6, 6, 4,  1)$ has the SLP .
\end{corollary}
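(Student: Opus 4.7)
The proof is essentially immediate from what has already been established. I observe that the $h$-vector $(1,4,6,6,4,1)$ is exactly the case $m=2$ of the third family treated in Theorem~\ref{keyresult}, namely $(1,4,6,\dots,2m+2,2m+2,\dots,6,4,1)$ with socle degree $d = 2m+1 = 5$. Hence Theorem~\ref{keyresult} applies and gives at once that any artinian Gorenstein algebra $A$ with this $h$-vector satisfies the WLP.

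To upgrade WLP to SLP, my plan is to invoke Corollary~\ref{wlp slp} directly. That corollary asserts that for any artinian Gorenstein algebra with $h$-vector of the form $(1,4,k,k,4,1)$, the WLP and the SLP are equivalent; its proof is a consequence of the Gordan--Noether theorem (Theorem~\ref{hesscriterium}) applied in codimension $n \le 4$, which forces the Hessian pairing $\times\ell^{d-2}\colon [A]_1 \to [A]_{d-1}$ to be an isomorphism for a general linear form. Since $(1,4,6,6,4,1)$ is of the form $(1,4,k,k,4,1)$ with $k=6$, this applies verbatim.

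Combining the two steps: the algebra has the WLP by Theorem~\ref{keyresult}, and therefore also the SLP by Corollary~\ref{wlp slp}. I do not anticipate any obstacle, since there is nothing more to verify beyond matching the $h$-vector to the hypotheses of the two previously established results. The only substantive ingredient is Theorem~\ref{keyresult}, which was the main effort of the section; the passage from WLP to SLP in this case is formal.
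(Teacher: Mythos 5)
Your proposal is correct and matches the paper's own proof exactly: the paper also deduces the WLP from Theorem~\ref{keyresult} (the case $m=2$ of the third family) and then upgrades to the SLP via Corollary~\ref{wlp slp}, since $(1,4,6,6,4,1)$ has the form $(1,4,k,k,4,1)$. Nothing further is needed.
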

\begin{proof}
By Corollary \ref{wlp slp}, it is enough to show that WLP holds and this follows from Theorem~\ref{keyresult}.
\end{proof}

We will end this section by proving some  partial results.

\begin{proposition}\label{aux}~
\begin{enumerate}
    \item 
    Any artinian Gorenstein algebra $A$ of socle degree $d\ge 2$ and  $h$-vector $(1,4,4,\dots,4,1)$ satisfies the SLP.    
    \item
    Any artinian Gorenstein algebra $A$ of  socle degree $d =2$ or $d\ge 4$ and $h$-vector $ (1, 5,5,\dots,5,1)$ satisfies the SLP.
    \item 
    Any artinian Gorenstein algebra $A$ of codimension $4$ and  socle degree $4$ satisfies the SLP.
\end{enumerate}        
\end{proposition}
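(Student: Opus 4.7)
The strategy for all three parts is via the Maeno--Watanabe Hessian criterion (\Cref{Hess}): a generic linear form $\ell$ is a strong Lefschetz element of $A = R/\Ann_R(F)$ iff $\det \Hess^i(F) \not\equiv 0$ for every $i = 0, 1, \dots, \lfloor d/2 \rfloor$. In all three cases $\Hess^0 = F \neq 0$ is trivial, and when $d$ is even $\Hess^{d/2}$ is (up to scalars) the middle catalecticant matrix, which has full rank by the Gorenstein property. Only the first and, in part (1), the intermediate Hessians are genuinely delicate.

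For part (3) (the $h$-vector is $(1,4,h_2,4,1)$ in codimension $4$), only $\Hess^0$, $\Hess^1$, and $\Hess^2$ need to be checked. $\Hess^0$ and $\Hess^2$ are handled as above, and $\Hess^1 \not\equiv 0$ follows from Gordan--Noether (\Cref{hesscriterium}) since $h_1 = 4$ forces $F$ not to be a cone. This yields SLP directly.

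For part (1), with $h$-vector $(1,4,\dots,4,1)$ in codimension $4$, the case $d = 2$ is immediate. For $d \geq 3$, Gordan--Noether again gives $\det \Hess^1(F) \not\equiv 0$ (since $F$ is not a cone), so by \Cref{Hess} the composition $\times \ell^{d-2} : [A]_1 \to [A]_{d-1}$ is a bijection between two $4$-dimensional spaces for generic $\ell$. Since every intermediate $[A]_i$ with $1 \leq i \leq d-1$ also has dimension $4$, each factor $\times \ell : [A]_i \to [A]_{i+1}$ in this composition is forced to be a bijection; combined with the automatic injectivity at $[A]_0 \to [A]_1$ and surjectivity at $[A]_{d-1} \to [A]_d$, this gives WLP. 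To upgrade to SLP, observe that every composition $\times \ell^{d-2i} : [A]_i \to [A]_{d-i}$ is then a composition of bijections in the flat middle, so its dual map is also a bijection, and hence $\Hess^i \not\equiv 0$ for every $2 \leq i \leq \lfloor d/2 \rfloor$ as well.

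For part (2), with $h$-vector $(1,5,\dots,5,1)$ in codimension $5$, the same reduction applies: once $\Hess^1(F) \not\equiv 0$ is established, the compositional argument of part (1) upgrades WLP to SLP. The $d = 2$ case is trivial, and $d = 3$ is excluded precisely because Perazzo forms in $5$ variables realize the $h$-vector $(1,5,5,1)$ with vanishing first Hessian and consequently fail WLP. The main obstacle is that Gordan--Noether does not extend to codimension $5$, so a direct cone-versus-Perazzo dichotomy is unavailable. My plan is to show that a form $F$ in five variables with $\Hess^1(F) \equiv 0$ must (after a change of coordinates) be of Perazzo type $F = \sum_i X_i g_i(Y_1,\dots,Y_m)$, and then verify by a direct dimension count on the second partial derivatives of such a form that $h_2$ necessarily exceeds $5$ once $d \geq 4$, contradicting the prescribed Hilbert function; this forces $\Hess^1 \not\equiv 0$ and completes the proof.
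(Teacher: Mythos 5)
Your parts (1) and (3) are correct and essentially the paper's own argument: Gordan--Noether (Theorem~\ref{hesscriterium}) gives $\det\Hess^1(F)\not\equiv 0$ in four variables, the middle catalecticant handles $\Hess^{d/2}$ for even $d$, and the constant middle Hilbert function lets you propagate the bijectivity of $\times\ell^{d-2}\colon[A]_1\to[A]_{d-1}$ through every intermediate degree and then to all the Hessians needed for Theorem~\ref{Hess}.

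Part (2), however, contains a genuine gap, and it sits exactly at the crux of the statement. Your reduction is fine: once $\det\Hess^1(F)\not\equiv 0$ is known, the flat Hilbert function $(1,5,\dots,5,1)$ upgrades this to SLP just as in part (1). But the claim that a quintic-variable form with $\Hess^1(F)\equiv 0$ and no linear annihilator must, after a change of coordinates, be of Perazzo type is only announced as a ``plan.'' This is not available from the paper's toolkit --- Theorem~\ref{hesscriterium} is stated only for $n\le 4$ variables --- and what you need is in effect the Gordan--Noether/Franchetta classification of hypersurfaces with vanishing Hessian in $\mathbb P^4$, a substantial structure theorem that you neither prove nor cite in a precise form (it does exist in the literature, e.g.\ in the Gordan--Noether circle of results treated in \cite{WatdeB}, so the route could be salvaged with an exact reference). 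The subsequent step, that any such non-cone form of degree $d\ge 4$ forces $h_2\ge 6$, is likewise asserted rather than carried out; it appears to be true (for $F=X_1p+X_2q+X_3r+s$ with $p,q,r$ linearly independent binary forms, no quadratic operator in $x_4,x_5$ annihilates all three, and the mixed partials span at least a three-dimensional space of binary forms), but as written the proof of part (2) is incomplete. For comparison, the paper avoids vanishing-Hessian theory in five variables altogether: for $d\ge 6$ it quotes Lemma~\ref{AADFIMMMN prop}; for $d=4,5$ it assumes WLP fails, uses Green's theorem (Lemma~\ref{lem:hilb is two}) to pin the Hilbert functions of $B=A/(0:\ell)$ and $C=A/(\ell)$ down to $(1,4,\dots,4,1)$ and $(1,4,1,\dots,1)$, applies part (1) to $B$ and the trivial bijectivity on the one-dimensional tail of $C$, and concludes by the Snake Lemma (Lemma~\ref{snake}), with SLP then following from the flat middle exactly as in your argument. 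If you wish to keep the Hessian route, you must either prove or precisely cite the five-variable classification and then actually do the $h_2$ count for all $d\ge 4$.
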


\begin{proof}~ 
\begin{enumerate}
    \item 
 If $d=2$, $A$ is compressed with even socle degree and, therefore, it has the SLP. For $d\ge 3$, it is an immediate consequence of the Gordan-Noether theorem: for a general linear for $\ell \in [A]_1$ there is an isomorphism $\times\ell ^{d-2}:[A]_1\longrightarrow  [A]_{d-1}$. Hence $A$ satisfies the SLP.

\item 
If $d=2$, $A$ is compressed with even socle degree and therefore, it has the SLP.
For $d\ge 6$, it follows from Lemma~\ref{AADFIMMMN prop}, 
so it remains to prove the cases $d = 4$ and $d = 5$. Suppose that $A$ fails WLP, i.e., for any linear form  $\ell \in [A]_1$, the multiplication map 
\[
\times \ell : [A]_i \longrightarrow [A]_{i+1} 
\] 
is not bijective for  $\lceil \frac{d-1}{2} \rceil$.  Set 
$ B = A/(0:\ell)$ and $C = A/(\ell)$. 
By Green's theorem, we have that $\dim_K [C]_3\le 1$ which means that  WLP fails by at most $1$. This gives the following table of possibilities for the Hilbert functions of $B$ and $C$:
\[
\begin{array}{c|c|c}
d & B & C \\ \hline
4 & (1,4,4,1) & (1,4,1,1) \\
5 & (1,4,4,4,1) & (1,4,1,1,1) \\
\end{array}
\]
Let $\ell_1\in [A]_1$ be a general linear form and apply Lemma~\ref{snake}. We have that $\times\ell_1\colon[B]_i\longrightarrow [B]_{i+1}$ is an isomorphism by (1).  Since $C$ is a standard graded algebra and the last two non-zero graded components of $C$ are one-dimensional, also the multiplication map $\times\ell_1\colon[C]_{i+1}\longrightarrow [C]_{i+2}$
is an isomorphism. Hence we deduce, by Lemma~\ref{snake} , that $\times \ell _1:[A]_{i+1}\rightarrow [A]_{i+2}$ is bijective and by duality of Gorenstein algebras $\times \ell _1:[A]_i\rightarrow [A]_{i+1}$ is also bijective. Therefore, $A$ has the WLP. Since the Hilbert function is constant from degree $1$ to degree $d-1$, it follows that $\times \ell^{d-2}\colon [A]_1\longrightarrow[A]_{d-1}$ is bijective and $A$ satisfies the SLP.

\item  This follows immediately from the Gordan-Noether theorem (cf. Theorem~\ref{hesscriterium}).
\end{enumerate}
\end{proof}

\begin{proposition}\label{156651}
    Any artinian Gorenstein algebra $A$ of $h$-vector $( 1, 5, 6, 6, 5, 1)$ has the WLP. 
\end{proposition}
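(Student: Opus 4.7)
The plan is to fix a general linear form $\ell_0\in[A]_1$ and consider $B=A/(0:\ell_0)$, $C=A/\ell_0 A$. By \Cref{basic seq}, $B$ is an artinian Gorenstein algebra of socle degree $4$, so $h_B=(1,a,b,a,1)$ for some integers $a,b$; WLP of $A$ amounts to $(a,b)=(5,6)$. From $h_A(i)=h_B(i-1)+h_C(i)$ and Green's part of \Cref{thm:useful} applied to $h_A(3)=6$ and $h_A(4)=5$, one obtains $h_C(3),h_C(4)\leq 1$, hence $b\geq 5$ and $a\geq 4$. So if WLP fails, $(a,b)\in\{(5,5),(4,6),(4,5)\}$. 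The case $(a,b)=(4,6)$ is immediate: it forces $h_C=(1,4,2,0,1,0)$, violating the standard graded property of $C$ (since $h_C(3)=0$ forces $h_C(4)=0$).

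For the two remaining cases I would apply \Cref{snake} with $f=\ell_0$ and multiplication by an independent general linear form $\ell_1$, using the SLP of $B$ coming from \Cref{aux}. In case $(a,b)=(5,5)$, the algebra $B$ has $h$-vector $(1,5,5,5,1)$ and hence SLP by \Cref{aux}(2), so $\times\ell_1\colon[B]_1\to[B]_2$ is bijective for a generic $\ell_1$; and since $h_C=(1,4,1,1,0,0)$ with $[C]_3=R_1\cdot[C]_2\neq 0$ and both spaces one-dimensional, $\times\ell_1\colon[C]_2\to[C]_3$ is also bijective for a generic $\ell_1$. Then \Cref{snake} at $i=2$ yields that $\times\ell_1\colon[A]_2\to[A]_3$ is bijective, so $h_{B_1}(2)=6$ for $B_1=A/(0:\ell_1)$, contradicting $b=5$. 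In case $(a,b)=(4,5)$, the algebra $B$ has $h$-vector $(1,4,5,4,1)$; it is a codimension-four Gorenstein algebra of socle degree $4$ and thus has SLP by \Cref{aux}(3), so $\times\ell_1\colon[B]_2\to[B]_3$ is surjective; and since $h_C=(1,4,2,1,1,0)$ with $[C]_4=R_1\cdot[C]_3\neq 0$ and both spaces one-dimensional, $\times\ell_1\colon[C]_3\to[C]_4$ is bijective for a generic $\ell_1$. Then \Cref{snake} at $i=3$ gives that $\times\ell_1\colon[A]_3\to[A]_4$ is surjective, so $h_{B_1}(3)=5$, and by Gorenstein symmetry $h_{B_1}(1)=5$, contradicting $a=4$.

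The principal obstacle is the case $(a,b)=(4,5)$, in which WLP of $A$ fails simultaneously at $[A]_1\to[A]_2$ and at $[A]_2\to[A]_3$; the key idea is to apply \Cref{snake} at the ``upper'' degree $i=3$ (rather than $i=2$) and then use the Gorenstein symmetry $h_{B_1}(3)=h_{B_1}(1)$ to transport the surjectivity back to the degree where the hypothesized failure occurs.
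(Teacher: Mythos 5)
Your proposal is correct and takes essentially the same route as the paper: the same decomposition $B=A/(0:\ell)$, $C=A/(\ell)$, Green's theorem to force $\dim_K[C]_3,\dim_K[C]_4\le 1$, the same two surviving cases with $h_B=(1,5,5,5,1)$ and $h_B=(1,4,5,4,1)$, and the same combination of Proposition~\ref{aux} with Lemma~\ref{snake} applied in degrees $2$ and $3$ respectively. The only cosmetic differences are your explicit elimination of the case $(a,b)=(4,6)$ and your use of the Gorenstein symmetry of $A/(0:\ell_1)$ to phrase the final contradiction, where the paper simply notes that surjectivity of $\times\ell_1\colon[A]_3\to[A]_4$ contradicts the generic non-surjectivity in that degree.
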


\begin{proof}
    Assume that $A$ fails WLP. Then for any linear form $\ell \in [A]_1$, the multiplication map $\times \ell : [A]_2\longrightarrow [A]_3 $ is not bijective. Fix a general linear form $\ell \in [A]_1$ and define  
\[
    B := A/(0:\ell)
    \hspace{2mm}\text{and}\hspace{2mm} C:= A/(\ell)
\]
recalling that $B$ is Gorenstein with socle degree one less than $A$. By Green's theorem we have that 
\[
0\le \dim_K[C]_3\le 1\quad\text{and}\quad 
0\le \dim_K[C]_4\le 1.
\]
Hence by the assumption that the WLP fails, $\dim_K[C]_3=1$.   
Using the short exact sequence 
\[
          0\to B(-1) \xrightarrow{\times \ell} A\to C\to 0
\]   
and the symmetry of the $h$-vector of $B$, there are only two possibilities for the $h$-vectors of $B$ and $C$.  
\[
\begin{array}{l|l|l}
 &B & C \\ \hline
 (i)&(1,5,5,5,1) & (1,4,1,1)\\
 (ii)&(1,4,5,4,1) & (1,4,2,1,1) \\
\end{array}
\]
Let $\ell_1\in [A]_1$ be a general linear form.
In case $(i)$, we have that $B$ satisfies the WLP by Proposition \ref{aux} (2) so the multiplication map $\times\ell_1\colon[B]_1\longrightarrow[B]_2$ is bijective. 
Since $\dim_K[C]_2=\dim_K[C]_3=1$ and the multiplication map $\times\ell_1\colon[C]_2\longrightarrow[C]_3$ is bijective. Lemma~\ref{snake} applies and the multiplication map $\times\ell_1\colon[A]_2\longrightarrow [A]_3$ is bijective, contradicting the  assumption that $A$ does not satisfy the WLP.

In case $(ii)$, we have that $B$ satisfies the WLP by Proposition \ref{aux} (3) so the multiplication map $\times\ell_1\colon[B]_2\longrightarrow[B]_3$ is surjective. 
Since $\dim_K[C]_3=\dim_K[C]_4=1$, the multiplication map $\times\ell_1\colon[C]_3\longrightarrow[C]_4$ is bijective. Lemma~\ref{snake} implies that the  map $\times\ell_1\colon[A]_3\longrightarrow [A]_4$ is surjective, contradicting the assumption that multiplication by a general linear form is not surjective from degree $3$ to degree $4$. Hence $A$ satisfies the WLP.
\end{proof}

\section{Main result}\label{S4}

In this section we will prove  the following theorem for 
artinian Gorenstein algebras.

\begin{theorem}\label{mainthm}
  Any artinian Gorenstein algebra $A$ of socle degree $d$ and Sperner number $\le d+1$ satisfies the WLP.  
\end{theorem}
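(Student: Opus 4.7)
The plan is to induct on the socle degree $d$. For the base case I would handle $d \le 5$ exhaustively: every symmetric Hilbert function $(1,h_1,\dots,h_{d-1},1)$ with $\max h_i \le d+1$ belongs to a finite list, and each case is covered either by a known result in codimension $\le 3$ (notably \cite{BMMNZ} and \cite{AAISY}), by the Gordan--Noether-style Corollary~\ref{wlp slp} in codimension four, by the small-Sperner propositions at the end of Section~\ref{S3} (Propositions~\ref{aux} and~\ref{156651}), or by the three family theorems (Theorems~\ref{135}, \ref{136}, \ref{keyresult}) evaluated at low values of $m$.

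For the inductive step, let $A$ be Gorenstein of socle $d\ge 6$ and Sperner $s \le d+1$. Choose a general linear form $\ell \in [A]_1$ and form $B = A/(0:\ell)$, which is Gorenstein of socle $d-1$, together with $C = A/(\ell)$, as in Notation~\ref{ABC}. By Gorenstein duality for $A$, proving WLP reduces to showing that $\times \ell\colon [A]_i \to [A]_{i+1}$ is injective for $i \le \lfloor (d-1)/2 \rfloor$. The key device is Lemma~\ref{snake} applied with $f=\ell$: it then suffices that $\times \ell$ be injective in the corresponding degree on both $B$ and $C$.

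For $C$, the Sperner bound gives $\dim [A]_j \le d+1 \le 2j$ for every $j \ge \lceil (d+1)/2 \rceil$, so Lemma~\ref{lem:hilb is two} forces $\dim [C]_j \le 1$ in that range, making multiplication by $\ell$ on $C$ trivially of maximal rank there; Macaulay's and Green's bounds from Theorem~\ref{thm:useful} then propagate the control downward to the few degrees near the middle. For $B$, since $\dim [B]_i \le \min(\dim [A]_i, \dim [A]_{i+1})$, one has $\mathrm{Sperner}(B) \le s \le d+1$. If $\mathrm{Sperner}(B) \le d$, i.e., at most the socle degree of $B$ plus one, the induction hypothesis applies directly to $B$, and Lemma~\ref{snake} closes the step.

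The main obstacle is the residual case $\mathrm{Sperner}(B) = d+1$, which forces $A$ to carry a plateau $h_m^A = h_{m+1}^A = d+1$ with $d = 2m+1$ odd. Here the induction hypothesis does not apply to $B$, and $A$ must be recognized directly. The allowable Hilbert functions are sharply constrained: in codimension three with $h_1 = 3$ they fall under Theorem~\ref{135} or~\ref{136}, and in codimension four with $h_1 = 4$ under Theorem~\ref{keyresult}. The remaining cases, where $h_1 \ge 5$ or the codimension exceeds four, I would reduce to these by passing to the subalgebra of $A$ generated by three or four general linear forms, mirroring the first step in the proof of Theorem~\ref{keyresult}; the Hessian-based computations carried out there transfer the WLP back to $A$ itself. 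Plugging this into Lemma~\ref{snake} completes the induction, and as a corollary one reads off the classification of Sperner-$\le 6$ Hilbert functions that force WLP.
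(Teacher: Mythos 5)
Your skeleton (induction on the socle degree, explicit base cases for $d\le 5$, the algebras $B=A/(0:\ell)$ and $C=A/(\ell)$, Lemma~\ref{snake}, and the three family theorems as key input) is the paper's, but the inductive step has a genuine gap exactly at the hard spot. Your claim that ``if $\mathrm{Sperner}(B)\le d$ the induction hypothesis applies to $B$ and Lemma~\ref{snake} closes the step'' is false. Take $A$ with one of the three $h$-vectors of Theorems~\ref{135}, \ref{136}, \ref{keyresult}, say $(1,4,6,\dots,2m,2m+2,2m+2,2m,\dots,6,4,1)$ with $d=2m+1$. From the exact sequence $0\to B(-1)\to A\to C\to 0$ one gets $\dim [C]_m\ge h_m-h_{m-1}=2$ while Green's bound (Theorem~\ref{thm:useful}(ii), or Lemma~\ref{lem:hilb is two}) gives $\dim[C]_{m+1}\le 1$, so the map $[C]_m\to[C]_{m+1}$ can never be injective; and $\dim[B]_{m-1}\le h_{m-1}=2m<2m+1\le h_{m+1}-\dim[C]_{m+1}=\dim[B]_m$, so the map $[B]_{m-1}\to[B]_m$ can never be surjective. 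Hence the Snake Lemma cannot decide the middle map of $A$ at all in these cases, even though (when WLP fails in the middle) $\mathrm{Sperner}(B)=d$ and $B$ does have the WLP by induction. This is precisely why the paper, after disposing of all degrees away from the middle, isolates the middle multiplication at odd socle degree and analyzes $\delta=h_m-h_{m-1}$: $\delta\ge 3$ is numerically impossible, $\delta=0,1$ are handled by the snake argument, and $\delta=2$ forces (via $h_i'\ge 2$ for $i\le m$ and $h_m\le 2m+2$) exactly the three $h$-vectors with $h_1\in\{3,4\}$, which is where Theorems~\ref{135}, \ref{136}, \ref{keyresult} are actually consumed. Your case split lets these hardest algebras slip through as ``closed by snake''.

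The residual case you do flag is both mischaracterized and not proved. $\mathrm{Sperner}(B)=d+1$ only forces two consecutive values $h_i=h_{i+1}=d+1$; it does not force $d$ odd nor the plateau to sit at the middle (consider $(1,7,7,7,7,7,1)$ with $d=6$). The paper never applies the induction hypothesis to $B$ unconditionally: it assumes failure of surjectivity in a degree $j$ at or above the middle, deduces $h_i'>0$ for $i\le j+1$, and from this that the Sperner number of $B$ drops strictly, so induction applies in that branch; even socle degrees are then completely covered and only the middle map for odd $d$ survives. Your control of $C$ also points the wrong way: Lemma~\ref{lem:hilb is two} bounds $[C]$ above the middle, which yields \emph{surjectivity} of $[C]_j\to[C]_{j+1}$ there (a one-dimensional graded piece of a standard graded algebra is reached by a general linear form), not the \emph{injectivity} below the middle that your duality reduction requires; ``Macaulay and Green propagate the control downward'' is not an argument. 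Finally, reducing codimension $\ge 5$ plateaus ``by passing to the subalgebra generated by three or four general linear forms, mirroring Theorem~\ref{keyresult}'' is unsupported: in Theorem~\ref{keyresult} that reduction works only under the hypothesis that the generic initial monomials of $[I]_2$ are $x_1^2,x_1x_2,x_1x_3,x_1x_4$, and the main theorem never needs such a reduction because the $\delta=2$ analysis shows codimension $\ge 5$ cannot occur in the hard middle case for $d\ge 6$; the borderline low-degree $h$-vectors such as $(1,5,6,6,5,1)$ and $(1,6,6,6,6,1)$ are precisely the base cases, handled by Propositions~\ref{aux} and~\ref{156651}.
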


\begin{remark}\label{optimal}
    The bound on the Sperner number given in the theorem above  is optimal since there are artinian Gorenstein algebras of any socle degree $d\ge 3$ with $h$-vector 
    \[
    (1,d+2,d+2,d+2,\dots,d+2,1)
    \]
    failing the WLP. We can obtain them as the trivial extension of the artinian level algebra $K[x,y]/(x,y)^{d}$ by its canonical module. As a quotient of the polynomial ring $R = K[x_1,x_2,\dots,x_{d+2}]$ this is given by the annihilator of the dual form 
    \[
    F = \sum_{i=1}^d X_iX_{d+1}^{d-1-i}X_{d+2}^i.
    \]
    
    In codimension $4$ it is optimal for $d=5$ and all $d\ge 7$ (see \cite[Prop. 3.5]{B}). It is still open whether WLP holds for all artinian Gorenstein algebras of socle degree $6$. (See \cite[\S 3.3]{AS})
    
    In codimension $5$, it is optimal for $d=3$ by the example above and for all  $d\ge 5$ by taking the trivial extension of a compressed level algebra of codimension $2$, socle degree $d-1$ and type $3$. However, it is not optimal for $d=4$ since artinian Gorenstein algebras with $h$-vector $(1,5,6,5,1)$ satisfy WLP by Corollary~\ref{low sperner} below.  
    
    In codimension $c\ge 6$ and higher, it is optimal for all $d\ge 3$ which we can see by taking the trivial extension of a compressed level algebra of codimension $2$, socle degree $d-1$ and type $c-2$.
\end{remark}

\begin{proof}[Proof of Theorem~\ref{mainthm}]
    We set $A = R/I$, let $\ell$ be a general linear form and let $B = A/(0:\ell)$ and $C = A/(\ell)$. 

    We start by proving the statement under the assumption that the socle degree is at most $5$. When the codimension is at most $3$, this follows from \cite[Corollary 3.12]{BMMNZ} and when the socle degree is at most $3$, it follows from Gordan-Noether. When the codimension is $4$, there are four possible $h$-vectors. 
    \begin{enumerate}
        \item  $(1,4,6,6,4,1)$. The WLP follows from Corollary~\ref{main}.
        \item $(1,4,5,5,4,1)$.  Green's theorem gives $\dim_K[C]_3=1$ if WLP fails, where $C = A/(\ell)$ for a general linear form. The $h$-vector of $B = A/(0\colon \ell)$ is $(1,4,4,4,1)$ and hence $B$ satisfies the WLP by Gordan-Noether. The $h$-vector of $C$ is $(1,3,1,1)$ and multiplication by a general linear form is bijective from degree $2$ to degree $3$. Hence  Lemma~\ref{snake} shows that $A$ satisfies the WLP.
        \item $(1,4,4,4,4,1)$. The WLP follows from Gordan-Noether.        
        \item $(1,4,5,4,1)$. The WLP follows from Gordan-Noether.
    \end{enumerate}
    When the codimension is $5$, there are three possible $h$-vectors. 
    \begin{enumerate}
        \item $(1,5,6,6,5,1)$. The WLP follows from Proposition~\ref{156651}.
        \item $(1,5,5,5,5,1)$. The WLP follows from Proposition~\ref{aux}.
        \item $(1,5,5,5,1)$. The WLP follows from Proposition~\ref{aux}.
    \end{enumerate}
    When the codimension is $6$, there is only one possible $h$-vector 
    \begin{enumerate}
        \item $(1,6,6,6,6,1)$. Green's theorem gives that $\dim_K[C]_3=1$ if the WLP fails, where $C = A/(\ell)$ for a general linear form $\ell$. The $h$-vector of $B = A/(0\colon \ell)$ is $(1,5,5,5,1)$ since it has to be unimodal. Hence $B$ satisfies the WLP by Proposition~\ref{aux}. The $h$-vector of $C$ is $(1,5,1,1,1)$ so multiplication by a general linear form is bijective from degree $2$ to degree $3$. By   Lemma~\ref{snake} we conclude that $A$ satisfies the $WLP$. 
    \end{enumerate}

It remains to prove the statement for socle degree $d\ge 6$.  Assume that the result is true for any artinian Gorenstein algebra  of socle degree $d\le d_0$, where $d_0\ge 5$, and we will prove it for any artinian Gorenstein algebra $A$ of socle degree $d = d_0+1$. 
Denote by $h_i=\dim[A]_i$, $h'_i=\dim [C]_k$ and $h''_i=\dim [B]_i$. Then $h_i=h''_{i-1}+h'_i$ for all $i>0$ and $h_0=h'_0=h''_0=1$. 

First assume that the map 
\[\times \ell\colon[A]_j\longrightarrow[A]_{j+1}\]
is not surjective for some $j\ge (d_0+1)/2$. By Lemma~\ref{lem:hilb is two} we have that $\dim_K[C]_{j+1}\le 1$, so the assumption implies that $\dim_K[C]_{j}\ge \dim_K[C]_{j+1}=1$ and hence the map $\times\ell_1\colon[C]_{j}\longrightarrow[C]_{j+1}$ is surjective for a general linear form $\ell_1$.

We observe that the Sperner number of $B$ is less than the Sperner number of $A$ since $h'_i>0$ for $i\le j+1$. Hence $B$ satisfies the hypothesis of the theorem and, by the induction hypothesis, $B$ satisfies the WLP and the map $\times\ell_1\colon[B]_{j}\longrightarrow[B]_{j+1}$ is surjective. By Lemma~\ref{snake} we conclude that the map $\times \ell_1:[A]_{j}\longrightarrow [A]_{j+1}$ is  surjective, contradicting the assumption. We conclude that the map $\times\ell\colon[A]_j\longrightarrow$ is surjective for $j\ge (d_0+1)/2$ and by duality, injective for $j\le (d_0-1)/2$.

Thus it  only remains to verify the surjectivity of the map 
\[
\times \ell\colon[A]_m\longrightarrow[A]_{m+1}
\]
for $m=n/2$ in the case when $n$ is even, which means that the socle degree of $A$ is $2m+1$.

From now on we will assume that $A$  has socle degree $2m+1$, Sperner number $\le 2m+2$ and it fails WLP only from degree $m$ to degree $m+1$. Hence multiplication by a general linear form $\ell$ is injective up to degree $m-1$ and surjective from degree $m+1$. We consider the difference $\delta:=h_{m}-h_{m-1}$, which is non-negative since $\times\ell\colon[A]_{m-1}\longrightarrow[A]_{m}$ is injective. We distinguish four cases:

\noindent$(i)$ If $\delta \ge 3$, we get that $h_m\ge 3m$ since $h'_i\ge \min\{i+1,3\}$, for $i=0,1,\dots,m$ and we have that multiplication by $\ell$ is injective up to degree $m$ in $A$. By the assumption on the Sperner number, we have that $h_m\le 2(m+1)$. Hence $3m\le 2(m+1)$ which contradicts the assumption that $m\ge 3$.

\noindent$(ii)$
When $\delta = 2$ we get that $h_i'\ge 2$ for $i=0,1,\dots,m$. Since multiplication by $\ell$ is injective on $A$ up to degree $m$, we get that $h'_i=h_i-h_{i-1}$, for $i=1,2,\dots,m$. Furthermore, $h'_1\ge 2$ and $h'_2\ge h'_3\ge \dots \ge h'_m =2$, while $1+h'_1+h'_2 +\cdots + h'_m  = h_m \le 2m+2$. This means that at most one of the $h'_i$ is $3$ and the remaining are $2$. Only $h'_1$  and $h'_2$ can be equal to $3$. 
Thus we have three possibilities for the $h$-vector: 
\begin{itemize}
    \item $(1,4,6,8,\dots,2m+2,2m+2,\dots, 10,8,6,4,1)$,
    \item $(1,3,6,8,\dots,2m+2,2m+2,\dots, 10,8,6,3,1)$
   or 
    \item $(1,3,5,7,\dots,2m+1,2m+1,\dots, 9,7,5,3,1)$. 
\end{itemize}
In these three cases, we apply Proposition~\ref{keyresult}, Propsition~\ref{136} and Proposition~\ref{135}.

\noindent$(iii)$ If $\delta = 1$ and $A$ only fails the WLP from degree $m$ to degree $m+1$ we obtain $h'_{m}=h'_{m+1}=1$ which implies that the Sperner number of $B$ is less than the Sperner number of $A$. The map $\times\ell_1\colon[C]_m\longrightarrow[C]_{m+1}$ is surjective by Green's theorem. $B$ satisfies the WLP by the induction hypothesis, so the map $\times\ell_1\colon[B]_{m-1}\longrightarrow[B]_{m}$ is surjective. Hence Lemma~\ref{snake} shows that the map $\times\ell_1\colon[A]_m\longrightarrow[A]_{m+1}$ is surjective.

\noindent $(iv)$ 
If $\delta =0$, we have $h_{m-1}=h_{m}=h_{m+1}=h_{m+2}$. Since the map $\times\ell\colon [A]_{m-1}\longrightarrow[A]_{m}$ is injective and $h_{m-1}=h_m$ we get $[C]_{m}=0$. Hence $[C]_j=0$ for $j\ge 0$ and also the map $\times\ell\colon[A]_{m}\longrightarrow[A]_{m+1}$ is  surjective. 
\end{proof}

Using our main result, we can now give a full classification of which $h$-vectors of Sperner number at most $6$  force the WLP for artinian Gorenstein algebras. 

\begin{corollary} \label{low sperner}  
Fix a Gorenstein $h$-vector $\underline{h}$ of Sperner number $\leq 6$. Then every Gorenstein algebra with Hilbert function $\underline{h}$ has the WLP if and only  if $\underline{h}$ is not one of the following $h$-vectors: $(1,5,5,1)$, $(1,6,6,1)$ or $(1,6,6,6,1)$.
\end{corollary}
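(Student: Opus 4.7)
My plan is to reduce to Theorem~\ref{mainthm} wherever possible and handle the remaining small-socle-degree cases directly. When the socle degree $d\ge 5$, the Sperner number is at most $6\le d+1$, so Theorem~\ref{mainthm} immediately yields the WLP. The cases $d\le 2$ are trivial since such algebras are compressed of even socle degree. So the remaining work is concentrated in socle degrees $3$ and $4$.

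For $d=3$ the Gorenstein $h$-vector has the form $(1,k,k,1)$ with $k\le 6$. For $k\le 3$ we are in codimension at most three and the WLP follows from \cite{BMMNZ}; for $k=4$, Proposition~\ref{aux}(1) applies. The remaining values $k=5,6$ give exactly the first two excluded $h$-vectors $(1,5,5,1)$ and $(1,6,6,1)$, and for each of them a classical construction (for instance a Perazzo-type trivial extension as in Remark~\ref{optimal}) produces a Gorenstein algebra that fails the WLP, settling the ``only if'' direction in these cases.

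For $d=4$, Theorem~\ref{mainthm} covers Sperner number at most $5$, so only Sperner number $6$ remains. Enumerating by codimension one obtains exactly four $h$-vectors: $(1,3,6,3,1)$ (handled by \cite{BMMNZ}), $(1,4,6,4,1)$ (handled by Proposition~\ref{aux}(3) via Gordan--Noether), $(1,5,6,5,1)$ in codimension $5$, and $(1,6,6,6,1)$ in codimension $6$, which is the third excluded case and for which a standard non-WLP example is already available in the literature.

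The only serious obstacle is $(1,5,6,5,1)$, which I would tackle as follows. Assume toward a contradiction that $A$ fails the WLP; by the self-duality of Gorenstein algebras the failure must occur at $\times\ell\colon [A]_2\to[A]_3$. Green's Theorem~\ref{thm:useful}(ii) forces $\dim_K[A/\ell A]_3\le 1$, so the deficiency is exactly one. A short Hilbert-function computation using the exact sequence of Lemma~\ref{basic seq} then pins down $B=A/(0:\ell)$ with $h$-vector $(1,4,4,1)$ and $C=A/(\ell)$ with $h$-vector $(1,4,2,1)$. Applying Lemma~\ref{snake} with $f=\ell$ and a second general linear form $\ell_1$, the left vertical map $\times\ell_1\colon [B]_1\to [B]_2$ is bijective by Proposition~\ref{aux}(1), while the right vertical map $\times\ell_1\colon [C]_2\to [C]_3$ is surjective because $[C]_3\cong K$ and the standard-graded generation of $C$ guarantees that $[C]_1\cdot [C]_2=[C]_3$, so no generic $\ell_1\in [C]_1$ annihilates $[C]_2$. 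The snake lemma then forces $\times\ell_1\colon [A]_2\to [A]_3$ to be surjective, contradicting the failure of the WLP and completing the proof.
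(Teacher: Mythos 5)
Your proposal is correct and follows essentially the same route as the paper: reduce to Theorem~\ref{mainthm} for socle degree $\ge 5$, dispatch the small-degree cases via \cite{BMMNZ}, Gordan--Noether and Proposition~\ref{aux}, and settle $(1,5,6,5,1)$ by exactly the paper's argument (Green's theorem pins down $B=(1,4,4,1)$, $C=(1,4,2,1)$, then Lemma~\ref{snake} with $\times\ell_1$ bijective on $B$ and surjective on $C$ gives a contradiction). The only cosmetic differences are that you obtain surjectivity of $[C]_2\to[C]_3$ from standard-graded generation rather than Green's theorem, and you point to Remark~\ref{optimal}-type trivial extensions for the three exceptional $h$-vectors where the paper cites the literature; both are fine.
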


\begin{proof}
From Theorem~\ref{mainthm} we have that the statement holds for if the socle degree is at least $5$. For socle degree at most $4$, WLP holds when the codimension is at most $3$ by \cite[Corollary 3.12]{BMMNZ}. In codimension $4$‚ Proposition~\ref{aux} (1) and (3) gives the result for socle degree at most $4$. When the socle degree is at most $2$, all artinian Gorenstein algebras have the WLP. It remains to consider the two $h$-vectors $(1,5,6,5,1)$ and $(1,5,5,5,1)$. In the second case, WLP follows from Proposition~\ref{aux} (2). Assume that the artinian Gorenstein algebra $A=R/I$ has Hilbert function $(1,5,6,5,1)$. Let $\ell$ be a general linear form, let $B = A/(0:\ell)$ and $C = A/(\ell)$. If $A$ does not satisfy the WLP we have that the $h$-vectors of $B$ and $C$ are $(1,4,4,1)$ and $(1,4,2,1)$ by Green's theorem. $B$ satisfies WLP by Gordan-Noether and for a general linear form $\ell_1$, the map $\times\ell_1\colon[C]_2\longrightarrow[C]_3$ is surjective by Green's theorem. Hence the map $\times\ell_1\colon[A]_2\longrightarrow[A]_3$ is surjective by Lemma~\ref{snake} and we conclude that $A$ satisfies the WLP.

The exceptions that do not force WLP can all be found, for instance, in \cite{G}. 
\end{proof}

\begin{remark}
    As mentioned in Remark~\ref{optimal}, it is open whether there are artinian Gorenstein algebras of codimension $4$ and socle degree $6$ not satisfying the WLP. See \cite[\S 3.3]{AS}.
\end{remark}

\end{document}